\documentclass[leqno, 11pt]{amsart}

\usepackage[top=30truemm,bottom=30truemm,left=25truemm,right=25truemm]{geometry}
\usepackage{graphicx}
\usepackage{comment}

\usepackage{amsmath, amssymb, amsthm, mathdots, mathtools}
\usepackage{mathrsfs, latexsym, bm, stmaryrd}
\usepackage{braket}
\usepackage[all,color]{xy}

\usepackage{ytableau}
\ytableausetup{boxsize=1.4em, centertableaux}
\usepackage[breakable, skins]{tcolorbox}
\usepackage{tikz}
\usetikzlibrary{positioning, shapes.misc, decorations.pathreplacing, fit}
\usepackage{float}

\allowdisplaybreaks[2]
\newcommand{\dd}{\mathrm{d}}

\usepackage[nameinlink]{cleveref}

\theoremstyle{plain}
\newtheorem{thm}{Theorem}[section]
\newtheorem{lem}[thm]{Lemma}
\newtheorem{prop}[thm]{Proposition}

\theoremstyle{definition}

\newtheorem{rem}[thm]{Remark}

\crefname{thm}{Theorem}{Theorems}
\crefname{lem}{Lemma}{Lemmas}
\crefname{prop}{Proposition}{Propositions}
\crefname{cor}{Corollary}{Corollaries}
\crefname{fact}{Fact}{Facts}
\crefname{conj}{Conjecture}{Conjectures}
\crefname{dfn}{Definition}{Definitions}
\crefname{rem}{Remark}{Remarks}

\mathtoolsset{showonlyrefs}

\newtcolorbox{mybox}[1]{
  colframe=black,
  colbacktitle=gray!50,
  coltitle=black,
  colback=gray!10,
  title={#1},
  fonttitle=\bfseries,
  arc=3mm,
  boxrule=1pt,
  titlerule=0.5pt,
  breakable
}

\title{Applications of a formula of Maesaka-Seki-Watanabe type 
for multiple harmonic $q$-sums}
\date{}
\author{Yoshihiro Takeyama}
\address{Department of Mathematics, Institute of Pure and Applied Sciences, University of Tsukuba, Ibaraki 305-8571, Japan}
\email{takeyama@math.tsukuba.ac.jp} 

\author{Yuto Tsuruta}
\address{Mathematical Institute, Tohoku University, Sendai 980-8578, Japan}
\email{tsuruta.yuuto.q7@dc.tohoku.ac.jp}

\begin{document}

\begin{abstract}
    Maesaka, Seki and Watanabe proved a formula for multiple harmonic sums.
    Yamamoto generalized it to Schur-type multiple harmonic sums,
    and the second author proved a $q$-analogue of this generalization.
    In this paper, we give two applications of the $q$-analogue formula.
    The first is an alternative proof of the duality of
    a $q$-analogue of multiple zeta values.
    The second is a proof of an identity for a $q$-analogue of
    the Kawashima function.
\end{abstract}

\maketitle

\section{Introduction}

We call a tuple of positive integers an \textit{index}.
An index $(k_{1}, \ldots , k_{r})$ is said to be \textit{admissible} if
$k_{r}\ge 2$.
For an admissible index $\bm{k}=(k_{1}, \ldots , k_{r})$,
\textit{multiple zeta value} (MZV for short)
$\zeta(\bm{k})$ is defined by
\begin{align}
    \zeta(\bm{k})=
    \sum_{0<m_{1}<\cdots<m_{r}}\frac{1}{m_{1}^{k_{1}}\cdots m_{r}^{k_{r}}}.
\end{align}
In \cite{MSWoriginal},
Maesaka, Seki and Watanabe obtained an interesting formula,
which we call the MSW formula in this paper,
for the truncated sum
\begin{align*}
    \zeta_{<N}(\bm{k})=
    \sum_{0<m_{1}<\cdots<m_{r}<N}
    \frac{1}{m_{1}^{k_{1}}\cdots m_{r}^{k_{r}}}.
\end{align*}

\begin{thm}{\cite[Theorem 1.3]{MSWoriginal}}
    \label{Thm: MSW formula}
    For an index $\bm{k}=(k_{1}, \ldots , k_{r})$ and
    a positive integer $N$, we set
    \begin{align}
        \zeta_{<N}^{\flat}(\bm{k})=
        \sum
        \prod_{j=1}^{r}
        \left(
        \frac{1}{N-n_{j,1}}
        \prod_{l=2}^{k_{j}}\frac{1}{n_{j,l}}
        \right),
    \end{align}
    where the sum is taken over all integers
    $n_{j, l} \, (1\le j \le r, \, 1\le l \le k_{j})$ satisfying
    \begin{align}
        0<n_{j,1}\leq\cdots\leq n_{j,k_{j}}<N \quad (1\le j \le r),
        \qquad
        n_{j,k_{j}}<n_{(j+1),1}\quad (1\leq j<r).
        \label{eq:sum-condition-MSW}
    \end{align}
    Then, for any index $\bm{k}$ and any positive integer $N$,
    it holds that
    \begin{align}
        \zeta_{<N}(\bm{k})=\zeta_{<N}^{\flat}(\bm{k}).
        \label{eq:MSW}
    \end{align}
\end{thm}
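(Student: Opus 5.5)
The plan is to prove \eqref{eq:MSW} by interpolating between the two sides with a family of hybrid sums, in the spirit of the ``connected sum'' method. Fix $N$ and the index $\bm{k}=(k_{1},\ldots,k_{r})$. For $0\le j\le r$ I will define a hybrid sum $Z_{j}$ as follows. The first $j$ components are summed harmonically over $0<m_{1}<\cdots<m_{j}$ with weight $\prod_{i\le j}m_{i}^{-k_{i}}$. The last $r-j$ blocks are summed in the flat way, over $n_{i,l}$ subject to \eqref{eq:sum-condition-MSW}, with weights $\frac{1}{N-n_{i,1}}\prod_{l\ge 2}n_{i,l}^{-1}$. The two parts are glued by the ordering condition $m_{j}<n_{j+1,1}$. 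By construction $Z_{0}=\zeta^{\flat}_{<N}(\bm{k})$ and $Z_{r}=\zeta_{<N}(\bm{k})$, so it suffices to prove $Z_{j-1}=Z_{j}$ for each $j$.

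Since the $j$th block is the only place where $Z_{j-1}$ and $Z_{j}$ differ, the step $Z_{j-1}=Z_{j}$ reduces to a one-block identity. For integers $0\le a<b\le N$ and $k\ge1$, set
\begin{align}
  H_{k}(a,b)=\sum_{a<m<b}\frac{1}{m^{k}},\qquad
  F_{k}(a,b)=\sum_{a<n_{1}\le\cdots\le n_{k}<b}\frac{1}{N-n_{1}}\prod_{l=2}^{k}\frac{1}{n_{l}} .
\end{align}
Here $a$ plays the role of $m_{j-1}$ (with $m_{0}=0$) and $b$ the role of $n_{j+1,1}$ (with $b=N$ when $j=r$). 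The goal is then to show $\sum_{b}H_{k}(a,b)\,T(b)=\sum_{b}F_{k}(a,b)\,T(b)$, where $T(b)$ denotes the flat tail weight.

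The two block sums do not agree for arbitrary $b$ (for $k=1$ they are $\sum 1/m$ versus $\sum 1/(N-n)$), so the identity genuinely uses the structure of the tail. To handle this, I would first establish the key lemma for depth one, i.e.\ the statement of the theorem for $r=1$:
\begin{align}
  \sum_{0<m<N}\frac{1}{m^{k}}=\sum_{0<n_{1}\le\cdots\le n_{k}<N}\frac{1}{N-n_{1}}\prod_{l=2}^{k}\frac{1}{n_{l}} .
\end{align}
I would prove this by induction on $k$. The elementary input is the partial fraction identity
\begin{align}
  \frac{1}{n(N-n)}=\frac{1}{N}\Bigl(\frac{1}{n}+\frac{1}{N-n}\Bigr),
\end{align}
together with a telescoping over $N$: both sides satisfy the same recursion when $N$ is replaced by $N+1$.

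For general depth, the cleanest route is probably to abandon the naive gluing and instead insert a connector factor $C(m,n)$ between the harmonic part and the flat part, as in Seki--Yamamoto. A natural first candidate is a ratio such as $\frac{(N-n)\cdots}{\cdots}$, built so that the one-block transport relation holds identically in $a$ and $b$ and the connector equals $1$ at both extremes ($j=0$ and $j=r$). I would then verify the transport relation ``moving one box'' by summation by parts over a single variable $n_{j,l}$ or $m_{j}$. Each such move is a routine telescoping check once the right connector is found.

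The main obstacle is finding the correct connector so that the local moves telescope exactly, or, equivalently, making the decoupling of the $j$th block from the tail rigorous. Before committing to a general form, I would test candidate connectors against small cases ($r=2$, $k_{i}\le2$, $N\le4$). If this approach stalls, the fallback is a direct induction on $N$ for the whole depth. That requires computing $\zeta^{\flat}_{<N+1}(\bm{k})-\zeta^{\flat}_{<N}(\bm{k})$ using the partial fraction identity above, and matching it with $\zeta_{<N}(k_{1},\ldots,k_{r-1})N^{-k_{r}}$.
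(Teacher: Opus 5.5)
The paper does not prove this theorem; it quotes it from Maesaka--Seki--Watanabe. So I am judging your proposal on its own, and it has a genuine gap.

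\textbf{The interpolation fails.} The chain $Z_{j-1}=Z_{j}$ is false as you set it up. Take $\bm{k}=(1,1)$ and $N=3$. Then $Z_{0}=\zeta^{\flat}_{<3}(1,1)=\frac{1}{2\cdot 1}=\frac12$ and $Z_{2}=\zeta_{<3}(1,1)=\frac{1}{1\cdot 2}=\frac12$, but $Z_{1}=\sum_{0<m_{1}<n_{2,1}<3}\frac{1}{m_{1}(3-n_{2,1})}=1$. So your reduced identity $\sum_{b}H_{k}(a,b)T(b)=\sum_{b}F_{k}(a,b)T(b)$ already fails for $k=1$, $a=0$, $T(b)=1/(3-b)$. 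Summing against the tail does not repair the block mismatch you noticed.

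\textbf{The connector is left unspecified.} Everything then rests on the connector, and finding it is essentially the whole proof.

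\textbf{The fallbacks hit the same obstacle.} $N$ sits in every weight $1/(N-n_{j,1})$, so $\zeta^{\flat}_{<N+1}-\zeta^{\flat}_{<N}$ does not localize on the last block. In the depth-one lemma, removing $n_{k}$ leaves $\sum_{0<n_{1}\le\cdots\le n_{k-1}\le n_{k}}$ with $N$ still fixed, which is not the depth-one statement for $k-1$. Induction on $k$ only closes once you have a statement for an arbitrary upper bound $n\le N-1$.

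\textbf{The missing statement.} Keep $N$ fixed. For $0\le m\le n\le N-1$ put $c_{m}(n)=\binom{n}{m}\big/\binom{N-1}{m}$, and $c_{m}(n)=0$ for $m>n$; this is a discrete analogue of $t^{m}$. Then
\begin{align*}
\sum_{0<n'\le n}\frac{1}{n'}\,c_{m}(n')&=\frac{1}{m}\,c_{m}(n)\qquad(m\ge 1),\\
\sum_{0<n'\le n}\frac{1}{N-n'}\,c_{m}(n'-1)&=\sum_{m'>m}\frac{1}{m'}\,c_{m'}(n)\qquad(m\ge 0).
\end{align*}
The first identity is the hockey-stick identity after writing $\frac{1}{n'}\binom{n'}{m}=\frac{1}{m}\binom{n'-1}{m-1}$. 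For the second, compare increments in $n$; it reduces to $\sum_{m'>m}\binom{n}{m'}/\binom{N-1}{m'}=\frac{n}{N-n}\binom{n-1}{m}/\binom{N-1}{m}$, which telescopes downward in $m$.

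\textbf{How the proof closes.} Add the flat variables one at a time from the left. Use the second identity for each $n_{j,1}$, which lies strictly above its predecessor and carries weight $1/(N-n_{j,1})$. Use the first identity for each $n_{j,l}$ with $l\ge 2$. This shows that the flat sum with all variables $\le n$ equals $\sum_{0<m_{1}<\cdots<m_{r}}c_{m_{r}}(n)\,m_{1}^{-k_{1}}\cdots m_{r}^{-k_{r}}$. At $n=N-1$ the factor $c_{m_{r}}(N-1)$ is the indicator of $m_{r}\le N-1$, which gives the theorem.

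\textbf{What your $Z_{j}$ lacks.} The harmonic part must be attached to the next flat variable $n$ through the soft weight $c_{m}(n-1)$, not the hard cutoff $m<n$. In the counterexample above, the correct weight is $c_{1}(1)=\frac12$ rather than $1$, which restores $Z_{1}=\frac12$.
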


For example, $\zeta^{\flat}_{<N}(1,2,3)$ is written as
\begin{align}
    \zeta^{\flat}_{<N}(1,2,3)=\sum_{0<n_{1}<n_{2}\leq n_{3}<n_{4}\leq n_{5}\leq n_{6}<N}
    \frac{1}{(N-n_{1})(N-n_{2})n_{3}(N-n_{4})n_{5}n_{6}}.
\end{align}
In the limit as $N\to \infty$,
the sum converges to the multiple integral
\begin{align*}
    \idotsint\limits_{0<t_{1}<t_{2}<t_{3}<t_{4}<t_{5}<t_{6}<1}
    \frac{\dd t_{1}}{1-t_{1}}\frac{\dd t_{2}\dd t_{3}}{(1-t_{2})t_{3}}\frac{\dd t_{4}\dd t_{5}\dd t_{6}}{(1-t_{4})t_{5}t_{6}},
\end{align*}
which is the iterated integral representation of
the MZV $\zeta(1, 2, 3)$.
In this sense, the MSW formula \eqref{eq:MSW} gives
a finite discretization of the iterated integral representation of MZVs.

In \cite{MSWyamamoto},
Yamamoto generalized the MSW formula \eqref{eq:MSW} to
Schur-type multiple harmonic sums,
which may be viewed as a finite discretization
of the integral formula for Schur multiple zeta values
established by Hirose, Murahara and Onozuka \cite{HMOSchurInt}.
It contains the star-version of the MSW formula:

\begin{thm}{\cite[Theorem 2.1]{MSWyamamoto}}
    For an index $\bm{k}=(k_{1}, \ldots , k_{r})$
    and a positive integer $N$, we set
    \begin{align*}
        \zeta_{<N}^{\star}(\bm{k})=
        \sum_{0<m_{1}\le \cdots \le m_{r}<N}
        \frac{1}{m_{1}^{k_{1}} \cdots m_{r}^{k_{r}}}
    \end{align*}
    and
    \begin{align*}
        \zeta_{<N}^{\star \flat}(\bm{k})=
        \sum
        \prod_{j=1}^{r}
        \left(
        \frac{1}{N-n_{j,1}}
        \prod_{l=2}^{k_{j}}\frac{1}{n_{j,l}}
        \right),
    \end{align*}
    where the sum is taken over all integers
    $n_{j, l} \, (1\le j \le r, \, 1\le l \le k_{j})$
    satisfying
    \begin{align}
        0<n_{j,1}\leq\cdots\leq n_{j,k_{j}}<N \quad (1\le j \le r),
        \qquad
        n_{j,k_{j}} \ge n_{j-1,1} \quad (1<j\le r).
        \label{eq:sum-condition-star-MSW}
    \end{align}
    Then, for any index $\bm{k}$ and any positive integer $N$, it holds that
    \begin{align}
        \zeta_{<N}^{\star}(\bm{k})=
        \zeta_{<N}^{\star \flat}(\bm{k}).
        \label{eq:MSW-star}
    \end{align}
\end{thm}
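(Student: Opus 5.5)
The plan is to prove \eqref{eq:MSW-star} by the ``connected sum'' method used for \eqref{eq:MSW}. We interpolate between the two sides by a family of mixed sums: the first few blocks are written in the harmonic form $1/m^{k}$, the remaining blocks in the flat form, and the two parts are joined by a connector kernel $C(m,a)$ depending on the last harmonic variable $m$ and on a flat variable $a$. Each move of one elementary unit (a single factor $1/m$, or a single flat variable) from one side to the other should leave the total unchanged. Chaining these moves transforms $\zeta^{\star}_{<N}(\bm{k})$ into $\zeta^{\star\flat}_{<N}(\bm{k})$. The candidate connector is the ratio of binomial coefficients
\begin{align*}
C(m,a)=\binom{m-1}{a-1}\Big/\binom{N-1}{a-1}\quad\text{or a close variant.}
\end{align*}
We fix its exact form by requiring the boundary values to be trivial: $C$ should reduce to $1$ when one side is empty.

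\textbf{Step 1 (order of processing).} Within each block the flat variables satisfy $n_{j,1}\le\cdots\le n_{j,k_j}$. Between blocks the condition in \eqref{eq:sum-condition-star-MSW} ties the \emph{last} variable of block $j$ to the \emph{first} variable of block $j-1$. This suggests converting the harmonic variables $m_r, m_{r-1},\dots,m_1$ (largest first), and within block $j$ producing the flat variables from $n_{j,k_j}$ down to $n_{j,1}$. The connector is then always attached to the most recently created flat variable. We will check on small cases such as $\bm{k}=(1)$, $(2)$ and $(1,1)$ that this order reproduces exactly the inequalities \eqref{eq:sum-condition-star-MSW}. For example, for $\bm{k}=(1)$ we have $\sum_{0<n<N}1/(N-n)=\sum_{0<m<N}1/m$, and for $\bm{k}=(2)$ the identity $\sum_{n_1\le n_2}1/((N-n_1)n_2)=\sum_m 1/m^2$ is classical.

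\textbf{Step 2 (two local identities).} The whole argument reduces to two one-variable summation identities for $C$.
\begin{itemize}
\item[(a)] The ``$1/n$'' move: one factor $1/m$ is exchanged for a flat variable $n_{j,l}$ ($l\ge2$) carrying weight $1/n$. This is an identity of the shape $\frac1m C(m,a)=\sum_{a\le n\le\cdots}\frac1n C(m,n)$, possibly with the roles of $m$ and $a$ swapped.
\item[(b)] The ``$1/(N-n)$'' move: the first flat variable $n_{j,1}$ of a block is created. This simultaneously consumes the harmonic variable $m_j$ and releases the summation over $m_j\ge m_{j-1}$. It is an identity of the shape
\begin{align*}
\sum_{m'\le m}\frac{1}{m}C(m,a)=\sum_{n}\frac{1}{N-n}\,C(m',n)\cdot(\text{correct range}).
\end{align*}
\end{itemize}
Both identities should follow from the Pascal recurrence and the elementary relation $\frac{a}{m}\binom{m}{a}=\binom{m-1}{a-1}$, together with telescoping of $\binom{m-1}{a-1}/\binom{N-1}{a-1}$ in $m$ or in $a$.

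\textbf{Main obstacle.} We expect the difficulty to lie in the precise choice of $C$, and in the weak versus strict inequalities. In the non-star formula the block boundary is strict ($n_{j,k_j}<n_{j+1,1}$) and the harmonic variables are strict. Here the harmonic variables are weakly ordered, and the flat blocks overlap in the reversed sense $n_{j,k_j}\ge n_{j-1,1}$. The connector must therefore be set up so that move (b) yields exactly a weak inequality against the \emph{first} variable of the previous block, not the last one.

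If a direct guess fails, the first fallback is to compute both sides for $\bm{k}=(1,1)$ and $(1,2)$ with $N$ symbolic and read off $C$. A second option is the alternative route of writing $1/m=\sum_{n=m}^{N-1}\bigl(\cdots\bigr)$ via a partial-fraction telescoping, and deducing the star formula from \eqref{eq:MSW}. That deduction would use the standard expansion of $\zeta^{\star}_{<N}$ as a sum of $\zeta_{<N}$ over indices obtained by merging adjacent entries. We would then have to check that the corresponding flat sums recombine into the region \eqref{eq:sum-condition-star-MSW}. This recombination is a combinatorial check on the flat configurations, and it is the second place where we expect the real work to lie.
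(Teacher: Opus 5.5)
\textbf{The approach has a genuine gap.} The paper does not prove this statement (it is quoted from Yamamoto), so your proposal has to stand on its own. As written it does not: the connector and both local identities (a), (b) are left undetermined, and these are the entire content of a connected-sum proof. The concrete choices you do commit to would also fail, for three reasons.

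First, the order in Step 1 does not match the shape of the region. The conditions say exactly that
\[
n_{1,k_1}\ge\cdots\ge n_{1,1}\le n_{2,k_2}\ge\cdots\ge n_{2,1}\le n_{3,k_3}\ge\cdots\ge n_{r,1}
\]
is a fence (a zigzag path). If the connector always sits on the most recently created flat variable, each new variable is tied only to its predecessor, so the creation order must run along this path from one of its ends. Your order ($m_r$ first, and block $j$ from $n_{j,k_j}$ down to $n_{j,1}$) leaves the connector on $n_{j,1}$ when block $j$ is finished. But block $j-1$ must be attached to $n_{j,k_j}$ through $n_{j,k_j}\ge n_{j-1,1}$, and the two agree only if $k_j=k_{j-1}=1$. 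That is why your test cases $(1)$, $(2)$, $(1,1)$ cannot detect the problem, whereas $(1,2)$ or $(2,1)$ already would.

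Second, the displayed candidate $C(m,a)=\binom{m-1}{a-1}/\binom{N-1}{a-1}$ violates the shape of (a). For $m=2$, $a=1$ the left side is $\frac12$, but the $n=1$ term of $\sum_{n\ge a}\frac1n C(2,n)$ is already $1$, and all terms are nonnegative.

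Third, the fallback is not a combinatorial recombination. Already for $\bm{k}=(1,1)$, after removing $\zeta^{\flat}_{<N}(1,1)$ from $\zeta^{\star\flat}_{<N}(1,1)$ one needs $\sum_{0<a<N}(N-a)^{-2}=\sum_{0<a\le b<N}\frac{1}{(N-a)b}$, which is itself an instance of the MSW formula. In general one must show that the overlapping two-block configurations sum to the flat sums of merged indices, an identity of the same difficulty as the theorem.

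\textbf{What does work.} Take $C(m,n)=\binom{n}{m}/\binom{N-1}{m}$, with the flat variable on top and the harmonic variable at the bottom (a discrete $t^m$). Then $C(0,n)=C(m,N-1)=1$, and $C(m,0)=0$ for $m\ge1$. Sum the flat variables in the order $n_{r,1},\ldots,n_{r,k_r},n_{r-1,1},\ldots,n_{r-1,k_{r-1}},\ldots,n_{1,k_1}$. Read from harmonic to flat, this means $m_1$ first, with each block running from $n_{j,k_j}$ down. The two identities needed are
\[
\sum_{n=1}^{n'}\frac{1}{n}C(m,n)=\frac{1}{m}C(m,n')\quad(m\ge1),
\qquad
\frac{1}{N-a}\bigl(1-C(M,a-1)\bigr)=\sum_{m=1}^{M}\frac{1}{m}\bigl(C(m,a)-C(m,a-1)\bigr).
\]
The first follows from $\frac1n\binom{n}{m}=\frac1m\binom{n-1}{m-1}$ and the hockey-stick identity; the second follows by induction on $M$.

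The point you flagged as the main obstacle is resolved by the complement $1-C$. Summing the last variable of block $j$ over $n_{j-1,1}\le n_{j,k_j}\le N-1$ produces $m_j^{-k_j}\bigl(1-C(m_j,n_{j-1,1}-1)\bigr)$. The second identity with $M=m_j$ then turns $\frac{1}{N-n_{j-1,1}}$ times this into a sum over $m_{j-1}\le m_j$, which is exactly where the weak inequality comes from. The chain starts with $M=N-1$, where $C(N-1,a-1)=0$, and ends with $C(m_1,N-1)=1$.
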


In \cite{Myarticle},
the second author obtained
a $q$-analogue of Yamamoto's generalization.
It contains a $q$-analogue of
\eqref{eq:MSW} and \eqref{eq:MSW-star}
given as follows.

We assume throughout this paper that
$q$ is a constant satisfying
\begin{align*}
    0<q<1.
\end{align*}
For an integer $m$, the $q$-integer $[m]$ is defined by
$[m]=(1-q^{m})/(1-q)$.

For an index $\bm{k}=(k_{1}, \ldots , k_{r})$, we set
\begin{align}
    \zeta_{<N}^{q}(\bm{k})=
    \sum_{0<m_{1}<\cdots<m_{r}<N}
    \frac{q^{(k_{1}-1)m_{1}+\cdots+(k_{r}-1)m_{r}}}{[m_{1}]^{k_{1}}\cdots [m_{r}]^{k_{r}}}
    \label{eq:q-flat-sum}
\end{align}
and define its star-version
$\zeta_{<N}^{\star, q}(\bm{k})$
as \eqref{eq:q-flat-sum} with the summation range
replaced by $0<m_{1}\le \cdots\le m_{r}<N$.
Note that $\zeta_{<N}^{q}(\bm{k})$ is a truncated sum of
the Bradley-Zhao model of
a $q$-analogue of MZV ($q$MZV for short) defined by
\begin{align}
    \zeta^{q}(\bm{k})=
    \sum_{0<m_{1}<\cdots<m_{r}}\frac{q^{(k_{1}-1)m_{1}+\cdots+(k_{r}-1)m_{r}}}{[m_{1}]^{k_{1}}\cdots [m_{r}]^{k_{r}}}
    \label{eq:qMZV}
\end{align}
for an admissible index $\bm{k}=(k_{1}, \ldots, k_{r})$.

\begin{thm}{\cite[Special cases of Theorem 1.2]{Myarticle}}
    \label{Thm: q-MSW formula}
    For an index $\bm{k}=(k_{1}, \ldots , k_{r})$ and
    a positive integer $N$, we set
    \begin{align}
        \zeta_{<N}^{q\flat}(\bm{k})=
        \sum\prod_{j=1}^{r}
        \left(
        \frac{1}{[N-n_{j,1}]}
        \prod_{l=2}^{k_{j}}
        \frac{q^{n_{j,l}}}{[n_{j,l}]}
        \right),
        \label{eq:q-flat}
    \end{align}
    where the sum is taken over all integers
    $n_{j, l} \, (1\le j \le r, \, 1\le l \le k_{j})$ satisfying
    \eqref{eq:sum-condition-MSW}.
    We also define its star-version $\zeta_{<N}^{\star, q\flat}(\bm{k})$
    as \eqref{eq:q-flat} with the summation range replaced by
    \eqref{eq:sum-condition-star-MSW}.
    Then, for any index $\bm{k}$ and
    any positive integer $N$, we have
    \begin{align}
        \zeta^{q}_{<N}(\bm{k})
         & =\zeta^{q\flat}_{<N}(\bm{k}),
        \label{eq:qMSW}                         \\
        \zeta^{\star, q}_{<N}(\bm{k})
         & =\zeta^{\star, q\flat}_{<N}(\bm{k}).
        \label{eq:star-qMSW}
    \end{align}
\end{thm}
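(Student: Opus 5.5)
The approach follows the connector (telescoping) strategy of Maesaka--Seki--Watanabe and Yamamoto. We interpolate between the two sides of \eqref{eq:qMSW} by a family of mixed sums and show that consecutive members agree, using one elementary $q$-identity. That identity is the $q$-partial fraction
\begin{align}
    \frac{[N]}{[n][N-n]}=\frac{1}{[N-n]}+\frac{q^{n}}{[n]} \qquad (0<n<N),
\end{align}
which follows from $[N]=[n]+q^{n}[N-n]$. It produces exactly the two kinds of factors $1/[N-n_{j,1}]$ and $q^{n_{j,l}}/[n_{j,l}]$ that occur in $\zeta^{q\flat}_{<N}$. Its companion telescoping form is
\begin{align}
    \frac{q^{m}}{[m]}\Bigl(\frac{1}{[m]}\Bigr)^{\!-1}\cdots,\qquad
    \frac{q^{m}}{[m][m+1]}\cdot\frac{1}{1}=\frac{1}{[m]}-\frac{1}{[m+1]}\ \text{(up to the factor } q^{m}/[m]\text{)},
\end{align}
and it lets a sum over one $n$-variable be collapsed onto an $m$-variable.

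\textbf{Step 1 (the connector).} For $0\le a<N$ and an index $\bm{k}$, introduce a one-parameter family of ``connected'' sums. Each keeps the outer variables $m_{1}<\dots<m_{i}$ of $\zeta^{q}_{<N}$ up to a cut and then continues with chains $n_{j,1}\le\dots\le n_{j,k_{j}}$ of $\zeta^{q\flat}_{<N}$ after the cut. The two pieces are joined by an explicit connector factor $C(m,n)$, a ratio of $q$-binomial type, for instance $\binom{N-1}{n}_{q}^{-1}\binom{N-1}{m}_{q}$ times a power of $q$, chosen so that $C(m,n)=1$ in the two extreme configurations. The first task is to choose $C$ precisely, with the right $q$-power, so that the boundary terms vanish. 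I would fix it by requiring that the case $\bm{k}=(1)$ reduce to the trivial identity
\begin{align}
    \sum_{0<m<N}\frac{1}{[m]}=\sum_{0<n<N}\frac{1}{[N-n]}.
\end{align}

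\textbf{Step 2 (transport lemmas).} Prove two local identities.
\begin{itemize}
\item[(a)] Summing the connector over an intermediate $n$-variable between two consecutive values, using the partial fraction above, converts one factor $q^{n}/[n]$ on the $\flat$ side into one power of $q^{m}/[m]$ on the $m$ side.
\item[(b)] The first variable of each block, with weight $1/[N-n_{j,1}]$, is absorbed in the same way, and this yields the final unweighted factor $1/[m]$.
\end{itemize}
Each identity is a finite telescoping computation in one variable. Together they show that the mixed sum is independent of the position of the cut. Comparing the two extreme positions then gives \eqref{eq:qMSW}.

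\textbf{Step 3 (the star version).} For \eqref{eq:star-qMSW}, rerun the same argument with non-strict inequalities $m_{i}\le m_{i+1}$ and the reversed linking condition $n_{j,k_{j}}\ge n_{j-1,1}$ from \eqref{eq:sum-condition-star-MSW}. Only the endpoints of the telescoping ranges change, shifting by one, so only the boundary bookkeeping differs.

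Alternatively, \eqref{eq:star-qMSW} can be derived from \eqref{eq:qMSW}. Expand $\zeta^{\star,q}_{<N}$ as a sum of $\zeta^{q}_{<N}$ over contractions of $\bm{k}$, and verify that the $\flat$-sums decompose compatibly. This requires checking that the $q$-weights $q^{(k-1)m}$ are additive under merging, which they are.

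The main obstacle I expect is Step 1: finding the connector with exactly the right power of $q$. The asymmetry between the factors $q^{n}/[n]$ and $1/[N-n]$ means the naive $q$-analogue of the classical connector $\binom{N}{n}$ leaves stray factors $q^{n}$. I would first try to read off $C$ from the depth-one identity for $\bm{k}=(k)$ and general $k$, then verify transport lemma (a) in that case, before attempting the general induction on the position of the cut.
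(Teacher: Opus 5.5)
The paper does not prove this statement (it is quoted from the second author's earlier work), so your plan must stand on its own. It has a genuine gap: the decisive ingredient is left open. You never fix the connector or write the transport lemmas as identities. Your sample $\binom{N-1}{m}_{q}\binom{N-1}{n}_{q}^{-1}$ is not of the right shape: with the natural conventions ($n=N-1$ for an empty $\flat$-part, $m=0$ for an empty $m$-part) it is not $1$ at either end. Calibrating on $\bm{k}=(1)$ cannot help, because that identity is just the substitution $n=N-m$ and sees only the boundary values $C(m,N-1)$ and $C(0,n)$. You also do not say how one connector produces two kinds of links: non-strict inside a block, and strict $n_{j,k_j}<n_{j+1,1}$ between blocks.

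For the record, a choice that works is $C(m,n)=\binom{n}{m}_{q}/\binom{N-1}{m}_{q}$ for $0\le m\le n\le N-1$ (and $0$ for $m>n$), with
\begin{align*}
\frac{q^{m}}{[m]}\,C(m,n)=\sum_{n'=m}^{n}\frac{q^{n'}}{[n']}\,C(m,n'),
\qquad
\sum_{m=m'+1}^{n}\frac{1}{[m]}\,C(m,n)=\sum_{n'=m'+1}^{n}\frac{1}{[N-n']}\,C(m',n'-1).
\end{align*}
The first relation is the $q$-hockey-stick identity after writing $\binom{n'}{m}_{q}/[n']=\binom{n'-1}{m-1}_{q}/[m]$. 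The second follows by differencing in $n$, using $\binom{n}{m}_{q}/\binom{N-1}{m}_{q}=\binom{N-1-m}{n-m}_{q}/\binom{N-1}{n}_{q}$ and $\sum_{j=0}^{J}q^{j}\binom{a+j}{j}_{q}=\binom{a+J+1}{J}_{q}$. Insert $C(m_r,N-1)=1$ into $\zeta^{q}_{<N}(\bm{k})$. Move the $k_r-1$ factors $q^{m_r}/[m_r]$ across with the first relation, and the last factor $1/[m_r]$ with the second. That step sums out $m_r$ and leaves the shifted kernel $C(m_{r-1},n_{r,1}-1)$, which is what produces the strict link. Continue down to $C(0,n_{1,1}-1)=1$; what remains is $\zeta^{q\flat}_{<N}(\bm{k})$, with no stray $q$-powers.

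Step 3 is wrong as written. The star link $n_{j,k_j}\ge n_{j-1,1}$ compares the top of block $j$ with the bottom of block $j-1$, so blocks may interleave. For instance, for $\bm{k}=(2,2)$ and $N\ge 4$ the point $(n_{1,1},n_{1,2},n_{2,1},n_{2,2})=(1,3,2,2)$ is allowed. The region is therefore not a chain, and the non-star argument does not carry over by shifting endpoints.

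Your alternative rests on a false claim: Bradley--Zhao weights are not additive under merging. Indeed $q^{(k_a-1)m}q^{(k_{a+1}-1)m}=q^{(k_a+k_{a+1}-2)m}$ is one factor $q^{m}$ short, so for example $\zeta^{\star,q}_{<N}(1,1)=\zeta^{q}_{<N}(1,1)+\zeta^{q}_{<N}(2)+(1-q)\zeta^{q}_{<N}(1)$. Because of the interleaving, the star-$\flat$ region is also not a union of non-star $\flat$ regions, so your compatibility check has no term-by-term meaning.

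What does work is the antipode relation: for $r\ge 1$,
\[
\sum_{i=0}^{r}(-1)^{i}X(k_i,\dots,k_1)\,X^{\star}(k_{i+1},\dots,k_r)=0 .
\]
It holds both for $(X,X^{\star})=(\zeta^{q}_{<N},\zeta^{\star,q}_{<N})$ and for $(X,X^{\star})=(\zeta^{q\flat}_{<N},\zeta^{\star,q\flat}_{<N})$. In each case every variable (resp.\ block) carries its own weight. The link of the reversed non-star sum, $m_j<m_{j-1}$ (resp.\ $n_{j,k_j}<n_{j-1,1}$), is exactly the negation of the star link. Splitting the free link between positions $i$ and $i+1$ therefore telescopes the sum. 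Induction on depth then gives \eqref{eq:star-qMSW} from \eqref{eq:qMSW}.
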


The purpose of this paper is to discuss
two applications of the formulas
\eqref{eq:qMSW} and \eqref{eq:star-qMSW}.
The first is an alternative proof for the duality
of $q$MZVs \eqref{eq:qMZV}
using the formula \eqref{eq:qMSW}.
The second is a proof of an identity
of a $q$-analogue of the Kawashima function
using the formula \eqref{eq:star-qMSW}.

In Section \ref{sec:duality},
we present a new proof of the duality of $q$MZVs.

Let $\bm{k}$ be an admissible index.
There uniquely exist positive integers $s$ and
$a_{1}, \ldots , a_{s}, b_{1}, \ldots , b_{s}$
such that
\begin{align}
    \bm{k}=(\underbrace{1,\ldots,1}_{a_{1}-1},b_{1}+1,\ldots,\underbrace{1,\ldots,1}_{a_{s}-1},b_{s}\color{black}+1).
    \label{eq:def-ab}
\end{align}
Then the dual index $\bm{k}^{\dagger}$ of $\bm{k}$ is defined by
\begin{align*}
    \bm{k}^{\dagger}=(\underbrace{1,\ldots,1}_{b_{s}-1},a_{s}+1,\ldots,\underbrace{1,\ldots,1}_{b_{1}-1},a_{1}+1).
\end{align*}

\begin{thm}[Duality of $q$MZVs, \cite{Bradley1}]\label{Thm: Ordinary duality}
    For any admissible index $\bm{k}$,
    it holds that $\zeta^{q}(\bm{k})=\zeta^{q}(\bm{k}^{\dagger})$.
\end{thm}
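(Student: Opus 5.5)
The plan is to deduce \cref{Thm: Ordinary duality} from the truncated formula \eqref{eq:qMSW} by a reflection of the summation variables $n\mapsto N-n$, and then let $N\to\infty$. Write $\bm{k}$ in the form \eqref{eq:def-ab} and list all variables $n_{j,l}$ of $\zeta^{q\flat}_{<N}(\bm{k})$ in one increasing chain $0<\nu_{1}\,\square\,\nu_{2}\,\square\cdots\square\,\nu_{w}<N$, where $w=k_{1}+\cdots+k_{r}$ and each $\square$ is either $<$ or $\le$. By \eqref{eq:sum-condition-MSW}, a strict sign $<$ sits exactly in front of each ``first'' variable $n_{j,1}$, which carries the weight $1/[N-n]$. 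A weak sign $\le$ sits in front of each ``later'' variable $n_{j,l}$ with $l\ge 2$, which carries the weight $q^{n}/[n]$. Thus $\zeta^{q\flat}_{<N}(\bm{k})$ is encoded by a word in two letters (first/later), and I expect the word attached to $\bm{k}^{\dagger}$ to be the reversed word with the two letters exchanged. This is the combinatorial content of the definition of $\bm{k}^{\dagger}$ via $(a_{i},b_{i})$.

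The first step is to check this word statement directly from \eqref{eq:def-ab}. The second step is to substitute $\nu_{i}=N-\mu_{w+1-i}$ in $\zeta^{q\flat}_{<N}(\bm{k})$. The reflection reverses the chain. It turns each weight $1/[N-\nu]$ into $1/[\mu]$ and each weight $q^{\nu}/[\nu]$ into $q^{N-\mu}/[N-\mu]$. Up to the interplay of strict and weak signs, this is the shape of $\zeta^{q\flat}_{<N}(\bm{k}^{\dagger})$ with the two roles exchanged. The signs must also be exchanged. A strict inequality in front of a first variable becomes a strict inequality after a later variable in the reflected chain. So I will need the shift identity for $q$-integers: in $x=q^{-n}$, $X=q^{-N}$ one has $1/[N-n]=(1-q)X/(X-x)$ and $q^{n}/[n]=(1-q)/(x-1)$, and $x\mapsto X/x$ exchanges the two. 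With this I will move the boundary between $<$ and $\le$ by one step.

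The main obstacle is that the reflection is not exactly weight-preserving. It produces extra factors: roughly $q^{-\mu}$ for each former first variable and $q^{N-\mu}$ for each former later variable, equivalently stray powers of $x$ and $X$. My first attempt will be to show that these factors telescope along each block, using that the number of first (resp.\ later) variables of $\bm{k}$ equals the number of later (resp.\ first) variables of $\bm{k}^{\dagger}$. If that fails, I will instead prove the weaker statement $\zeta^{q\flat}_{<N}(\bm{k})-\zeta^{q\flat}_{<N}(\bm{k}^{\dagger})\to 0$ as $N\to\infty$. For that, split each sum according to whether the variables lie below or above $N/2$, use $q^{n}/[n]\le (1-q)q^{n}/(1-q)$ and $1/[N-n]\le 1$ to bound the mismatch terms by a constant times a power of $q^{N/2}$, and conclude by dominated convergence. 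Admissibility of both $\bm{k}$ and $\bm{k}^{\dagger}$ is exactly what makes $\zeta^{q}_{<N}\to\zeta^{q}$ on both sides. Combining this limit with \eqref{eq:qMSW} for $\bm{k}$ and for $\bm{k}^{\dagger}$ gives the theorem.
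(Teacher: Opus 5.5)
There is a genuine gap. In the $q$-setting the reflection $n\mapsto N-n$ is not even an approximate symmetry of $\zeta^{q\flat}_{<N}$, so neither of your two routes can close.

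Your first route would give $\zeta^{q}_{<N}(\bm{k})=\zeta^{q}_{<N}(\bm{k}^{\dagger})$ for every $N$, and this is false. For $\bm{k}=(3)$, $\bm{k}^{\dagger}=(1,2)$ and $N=2$ one has $\zeta^{q}_{<2}(3)=q^{2}$, while $\zeta^{q}_{<2}(1,2)=0$. Nor does $x\mapsto X/x$ exchange the two weights. It sends $(1-q)X/(X-x)$ to $x\cdot(1-q)/(x-1)$, and it sends $(1-q)/(x-1)$ to $(x/X)\cdot(1-q)X/(X-x)$. Consequently, up to the strict/weak boundary changes, the reflected summand of $\zeta^{q\flat}_{<N}(\bm{k})$ equals the summand of $\zeta^{q\flat}_{<N}(\bm{k}^{\dagger})$ times $q^{(|\bm{k}|-r)N-\sum_{i}\mu_{i}}$. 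For $\bm{k}=(3)$ this reads
\begin{align*}
\frac{1}{[\mu_{3}]}\frac{q^{N-\mu_{2}}}{[N-\mu_{2}]}\frac{q^{N-\mu_{1}}}{[N-\mu_{1}]}
=q^{2N-\mu_{1}-\mu_{2}-\mu_{3}}\cdot
\frac{1}{[N-\mu_{1}][N-\mu_{2}]}\frac{q^{\mu_{3}}}{[\mu_{3}]}.
\end{align*}
This factor is exponentially large or exponentially small in $N$, so it cannot telescope.

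The same computation rules out your fallback. Since $\bm{k}$ is admissible, the largest variable carries $q^{n}/[n]$. Hence the mass of $\zeta^{q\flat}_{<N}(\bm{k})$ sits at bounded $\nu$'s, that is, after reflection at $\mu$'s within $O(1)$ of $N$. There the factor above is of order $q^{-rN}$, so the dual summand is negligible. Conversely, $\zeta^{q\flat}_{<N}(\bm{k}^{\dagger})$ lives at bounded $\mu$'s, where the reflected summand is smaller by a factor of order $q^{(|\bm{k}|-r)N}$.

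So the two sums, each of size $O(1)$, have essentially disjoint supports. After splitting at $N/2$, the mismatch on each half is the full value of one of the two sums, not $O(q^{N/2})$. Showing that the two limits agree is the entire content of the duality, and your plan has no step that computes either limit. For $q=1$ the weights $1/(N-n)$ and $1/n$ are swapped exactly by the reflection, so there your strategy works with only boundary terms to control; the $q$-weights lack this symmetry.

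The missing ingredient is an evaluation of $\lim_{N\to\infty}\zeta^{q\flat}_{<N}(\bm{k})$ itself. The paper does this in three steps:
\begin{itemize}
\item replace each $1/[N-n_{j,1}]$ by $1-q$ at cost $q^{N}O(N^{J})$, which again uses that the largest variable carries $q^{n}/[n]$;
\item sum out the now free first variables to obtain binomial coefficients $\binom{n_{j,1}-n_{j-1,b_{j-1}}}{a_{j}}$;
\item resum by expanding $q^{n}/[n]=(1-q)\sum_{l\ge1}q^{ln}$ and exchanging the roles of the gaps between consecutive variables and the exponents $l$.
\end{itemize}
That last exchange is where $\bm{k}\mapsto\bm{k}^{\dagger}$ actually takes place.
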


By taking the limit as $q\to 1$, we recover the duality
$\zeta(\bm{k})=\zeta(\bm{k}^{\dagger})$ of MZVs.

The paper \cite{MSWoriginal} provides
a proof of the duality of MZVs by means of the MSW formula.
It is proved that,
for any admissible index $\bm{k}$, there exists $J>0$ such that
\begin{align*}
    \zeta_{<N}(\bm{k})-\zeta_{<N}(\bm{k}^{\dagger})=
    N^{-1}O((\log{N})^{J}) \qquad  (N \to \infty).
\end{align*}
By taking the limit as $N\to \infty$,
we obtain the duality
$\zeta(\bm{k})=\zeta(\bm{k}^{\dagger})$.

In contrast to the MZV case,
our proof of the duality of $q$MZVs
proceeds as follows.
Using the formula \eqref{eq:qMSW},
one can obtain,
for any admissible index $\bm{k}$,
a finite sum $\phi_{<N}^{q}(\bm{k})$
such that
\begin{align*}
    \zeta_{<N}^{q}(\bm{k})=\zeta_{<N}^{q\flat}(\bm{k})=\phi_{<N}^{q}(\bm{k})+q^{N}O(N^{J})
\end{align*}
with some $J>0$ and
\begin{align}
    \phi_{<N}^{q}(\bm{k}) \to \zeta^{q}(\bm{k}^{\dagger})
    \label{eq:phi-zeta}
\end{align}
as $N\to \infty$.
The duality is then derived from this.
To show \eqref{eq:phi-zeta},
we employ the same technique as was used to prove
the resummation identity given in \cite{Takeyama2}.

In Section \ref{sec:Kawashima},
we prove an identity of a $q$-analogue of the Kawashima function.

In \cite{Kawashima1}, Kawashima introduced a function
$F_{\bm{k}}(z)$ for an index $\bm{k}$, which we call the Kawashima function.
The function $F_{\bm{k}}(z)$ interpolates the truncated sum
$\zeta_{<N}(\bm{k})$ as
$F_{\bm{k}}(N-1)=\zeta_{<N}(\bm{k})$
for any positive integer $N$.
In \cite{Kawashima2}, Kawashima obtained an alternative expression $G_{\bm{k}}(z)$
of the Kawashima function such that
\begin{align}
    F_{\bm{k}}(z)=G_{\overleftarrow{\bm{k}}}(z)
    \label{eq:Kawashima-identity}
\end{align}
in the region $\mathop{\mathrm{Re}}{z}>-1$,
where $\overleftarrow{\bm{k}}:=(k_{r}, k_{r-1}, \ldots , k_{1})$ for
$\bm{k}=(k_{1}, \ldots , k_{r})$.
The key to the proof of \eqref{eq:Kawashima-identity}
is the identity
\begin{align}
    F_{\bm{k}}(N-1)=G_{\overleftarrow{\bm{k}}}(N-1)
    \label{eq:key-identity}
\end{align}
for any positive integer $N$.
In \cite{MSWyamamoto}, Yamamoto pointed out that
the identity \eqref{eq:key-identity} is nothing but the star-version
\eqref{eq:MSW-star} of the MSW formula.

In \cite{Takeyama1}, the first author defined a $q$-analogue
$F_{\bm{k}}^{q}(z)$ of the Kawashima function.
% and proved a family of quadratic relations among $q$MZVs.
In Section \ref{sec:Kawashima}, we introduce a $q$-analogue $G_{\bm{k}}^{q}(z)$
of the function $G_{\bm{k}}(z)$
and show the identity $F_{\bm{k}}^{q}(z)=G_{\overleftarrow{\bm{k}}}^{q}(z)$
on the disk $|z|<q^{-1}$,
which follows from the formula \eqref{eq:star-qMSW} and
the identity theorem for analytic functions.

Throughout this paper we set
$|\bm{k}|=\sum_{a=1}^{r}k_{a}$
for an index $\bm{k}=(k_{1}, \ldots , k_{r})$.

\section*{Acknowledgements}
This work was supported by JSPS KAKENHI Grant Number JP22K03243.

\section{A new proof of duality of $q$MZVs}
\label{sec:duality}

In this section, we give an alternative proof of
Theorem \ref{Thm: Ordinary duality}.

\begin{prop}\label{Prop: asym exp of qMZV}
    For an index $\bm{k}=(k_{1}, \ldots , k_{r})$ and a positive integer $N$,
    we set
    \begin{align*}
        \phi_{<N}^{q}(\bm{k})=(1-q)^{r}
        \sum \prod_{j=1}^{r}
        \left(
        \prod_{l=2}^{k_{j}}\frac{q^{n_{j,l}}}{[n_{j,l}]}
        \right),
    \end{align*}
    where the sum is taken over all integers $n_{j, l} \, (1\le j \le r, \, 1\le l \le k_{j})$
    satisfying \eqref{eq:sum-condition-MSW}.
    Then, for any admissible index $\bm{k}$, it holds that
    \begin{align}\label{eq: asymptotic1}
        \zeta_{<N}^{q\flat}(\bm{k})=\phi_{<N}^{q}(\bm{k})+q^{N}O(N^{J})
        \qquad (N \to \infty)
    \end{align}
    for some $J>0$ independent of $N$.
\end{prop}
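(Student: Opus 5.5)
Throughout, write $1/[N-n] = (1-q)/(1-q^{N-n})$. The plan is to compare $\zeta_{<N}^{q\flat}(\bm{k})$ and $\phi^{q}_{<N}(\bm{k})$ term by term. They are sums over the same finite set of tuples $(n_{j,l})$ satisfying \eqref{eq:sum-condition-MSW}. The only difference is that each factor $1/[N-n_{j,1}]$ in \eqref{eq:q-flat} is replaced by $(1-q)$ in $\phi^{q}_{<N}$. So we write
\begin{align*}
  \prod_{j=1}^{r}\frac{1}{[N-n_{j,1}]}=(1-q)^{r}\prod_{j=1}^{r}\frac{1}{1-q^{N-n_{j,1}}},
\end{align*}
and expand the difference of products telescopically:
\begin{align*}
  \prod_{j}\frac{1}{1-x_j}-1=\sum_{j=1}^{r}\Bigl(\prod_{i<j}\frac{1}{1-x_i}\Bigr)\frac{x_j}{1-x_j},
  \qquad x_j=q^{N-n_{j,1}}.
\end{align*}
Hence $\zeta^{q\flat}_{<N}(\bm{k})-\phi^{q}_{<N}(\bm{k})$ is a finite sum of $r$ pieces. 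In each piece one factor is $q^{N-n_{j,1}}/(1-q^{N-n_{j,1}})$, and the remaining factors $1/(1-q^{N-n_{i,1}})$ are bounded by $1/(1-q)$, since $N-n_{i,1}\ge 1$.

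Next we bound each piece. All summands are positive, so it suffices to bound
\begin{align*}
  \sum q^{N-n_{j,1}}\prod_{a,l\ge 2}\frac{q^{n_{a,l}}}{[n_{a,l}]}
\end{align*}
over the tuples in \eqref{eq:sum-condition-MSW}, up to constants. Admissibility enters here. Since $k_r\ge 2$, the last variable $n_{r,k_r}$ appears with the weight $q^{n_{r,k_r}}/[n_{r,k_r}]$. By \eqref{eq:sum-condition-MSW} it satisfies $n_{r,k_r}\ge n_{j,1}$ for every $j$. Using $[n]\ge 1$ and $q^{n_{a,l}}\le 1$ for all other weights, the summand is at most
\begin{align*}
  q^{N-n_{j,1}}\,q^{n_{r,k_r}}\le q^{N-n_{j,1}}q^{n_{j,1}}=q^{N}.
\end{align*}
The number of tuples is at most $N^{|\bm{k}|}$. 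Therefore each piece is $O(q^{N}N^{|\bm{k}|})$, and we may take $J=|\bm{k}|$.

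The only delicate point is making sure that the factor $q^{n_{r,k_r}}$ really is available: it must come from an index $l\ge 2$ in the last block, which is exactly the condition $k_r\ge2$. Without admissibility the pieces with $j=r$ would only give $q^{N-n_{r,1}}$, whose sum is $O(1)$ and not small. Everything else is routine: the prefactor bounds $1/(1-q^{m})\le 1/(1-q)$ for $m\ge1$, and counting the tuples.
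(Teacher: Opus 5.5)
Your proof is correct and takes essentially the same route as the paper. The paper splits $1/[N-n_{j,1}]=(1-q)+q^{N-n_{j,1}}/[N-n_{j,1}]$, whereas you telescope the product; in both, each correction term pairs $q^{N-n_{j,1}}$ with the factor $q^{n_{r,k_r}}$ that admissibility guarantees in the last block (bounding it by $q^{N}$), and the polynomial count of tuples supplies $O(N^{J})$, for which you give the explicit value $J=|\bm{k}|$.
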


\begin{proof}
    Suppose that $\bm{k}=(k_{1}, \ldots , k_{r})$ is admissible.
    Note that $k_{r}\ge 2$.
    Since $0<q<1$, we have $1/[m]\le 1$ for any positive integer $m$.
    Therefore, if $0<n_{j,1}\le n_{r, k_{r}}<N$, it holds that
    \begin{align}\label{eq: key tool}
        0<\frac{1}{[N-n_{j,1}]}\frac{q^{n_{r, k_{r}}}}{[n_{r, k_{r}}]}=
        \left(1-q+\frac{q^{N-n_{j,1}}}{[N-n_{j,1}]}\right)
        \frac{q^{n_{r, k_{r}}}}{[n_{r, k_{r}}]}
        \le (1-q)\frac{q^{n_{r, k_{r}}}}{[n_{r, k_{r}}]}+q^{N}.
    \end{align}
    Applying \eqref{eq: key tool} to $\zeta^{q\flat}_{<N}(\bm{k})$
    for $1\le j\le r$, we obtain \eqref{eq: asymptotic1}.
\end{proof}

\begin{lem}
    Suppose that $a, b\ge 1$ and $c, d\ge 0$.
    It holds that
    \begin{align}
        \label{eq:duality-lem}
        (1-q)^{a}\sum_{d<n_{1}\le \cdots \le n_{b}}
        \binom{n_{1}-d}{a}
        \left( \prod_{j=1}^{b-1}\frac{q^{n_{j}}}{[n_{j}]}
        \right)
        \frac{q^{n_{b}}}{[n_{b}]}q^{cn_{b}}=
        \sum_{c<m_{1}<\cdots <m_{b}}
        \left(
        \prod_{j=1}^{b-1}\frac{1}{[m_{j}]}
        \right)
        \frac{q^{am_{b}}}{[m_{b}]^{a+1}}
        q^{dm_{b}}.
    \end{align}
\end{lem}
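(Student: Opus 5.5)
The plan is to prove \eqref{eq:duality-lem} by expanding both sides into geometric series and computing the left-hand side directly. No induction on $b$ is needed. The basic tool is the expansion
\begin{align*}
    \frac{q^{n}}{[n]}=(1-q)\frac{q^{n}}{1-q^{n}}=(1-q)\sum_{l\ge 1}q^{ln} \qquad (n\ge 1).
\end{align*}
We apply it to each of the $b$ factors $q^{n_{j}}/[n_{j}]$ on the left-hand side, with summation indices $l_{1},\ldots,l_{b}\ge 1$. Every term is nonnegative, so Tonelli's theorem lets us interchange the sums over the $l_{j}$ and the $n_{j}$ freely. This also shows that both sides converge or diverge together; since we will obtain equality as positive series, the identity holds in $[0,\infty]$, and in fact both sides are finite.

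For fixed $l_{1},\ldots,l_{b}$, define $M_{b}=l_{b}+c$ and $M_{j}=l_{j}+M_{j+1}$ for $j<b$. Then $M_{1}=l_{1}+\cdots+l_{b}+c$, and the $M_{j}$ satisfy $c<M_{b}<M_{b-1}<\cdots<M_{1}$. Conversely, every strictly decreasing chain $M_{1}>\cdots>M_{b}>c$ arises from exactly one tuple of $l_{j}\ge1$. The inner sum to evaluate is
\begin{align*}
    \sum_{d<n_{1}\le\cdots\le n_{b}}\binom{n_{1}-d}{a}q^{l_{1}n_{1}+\cdots+l_{b-1}n_{b-1}+(l_{b}+c)n_{b}}.
\end{align*}
We sum from the top variable down.

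First write $n_{b}=n_{b-1}+e$ with $e\ge0$. The sum over $e$ gives $1/(1-q^{M_{b}})$ and leaves $q^{M_{b}n_{b-1}}$, so the exponent of $q^{n_{b-1}}$ becomes $l_{b-1}+M_{b}=M_{b-1}$. Iterating this step down to $n_{2}$ produces the factor $\prod_{j=2}^{b}(1-q^{M_{j}})^{-1}$ and leaves the sum $\sum_{n_{1}>d}\binom{n_{1}-d}{a}q^{M_{1}n_{1}}$. For this last sum we use the negative binomial series: with $x=q^{M_{1}}$,
\begin{align*}
    \sum_{n>d}\binom{n-d}{a}x^{n}=\frac{x^{a+d}}{(1-x)^{a+1}}.
\end{align*}

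Next we collect the powers of $1-q$: there are $(1-q)^{a}$ from the prefactor and $(1-q)^{b}$ from the expansions. Distributing them gives
\begin{align*}
    (1-q)^{b-1}\prod_{j=2}^{b}\frac{1}{1-q^{M_{j}}}=\prod_{j=2}^{b}\frac{1}{[M_{j}]}
\end{align*}
and
\begin{align*}
    (1-q)^{a+1}\frac{q^{(a+d)M_{1}}}{(1-q^{M_{1}})^{a+1}}=\frac{q^{aM_{1}}q^{dM_{1}}}{[M_{1}]^{a+1}}.
\end{align*}
So the left-hand side equals the sum of these products over all chains $M_{1}>\cdots>M_{b}>c$. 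Finally, relabel $m_{j}=M_{b+1-j}$. The chain becomes $c<m_{1}<\cdots<m_{b}$, the largest index $m_{b}=M_{1}$ carries $q^{am_{b}}q^{dm_{b}}/[m_{b}]^{a+1}$, and the remaining indices carry $1/[m_{j}]$. This is exactly the right-hand side of \eqref{eq:duality-lem}.

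The only step needing care is the bookkeeping of the telescoping exponents $M_{j}$. In particular, one must check that the constraints $\le$ among the $n$'s and strict $>$ among the $M$'s match under the change of variables: the condition $l_{j}\ge1$ is exactly what makes the $M_{j}$ strictly decreasing and $M_{b}>c$. The binomial generating function is standard.
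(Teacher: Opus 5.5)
Your proposal is correct and follows essentially the same route as the paper: expand each $q^{n_j}/[n_j]$ as a geometric series in $l_j\ge 1$, and pass to the partial sums $M_j=c+l_j+\cdots+l_b$, which are exactly the paper's $m_{b-j+1}$. Then sum the gaps $n_j-n_{j-1}$ geometrically and finish with $\sum_{s}\binom{s}{a}x^{s}=x^{a}/(1-x)^{a+1}$; the only cosmetic difference is that you sum over the gaps one at a time from the top, whereas the paper makes the full change of variables $(n,l)\mapsto(s,m)$ at once and sums over all $s_j\ge 0$ simultaneously.
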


\begin{proof}
    We denote the left-hand side by $I$.
    We have
    \begin{align*}
        I=(1-q)^{a+b}\sum_{d\le n_{1}\le \cdots \le n_{b}}
        \sum_{l_{1}, \ldots, l_{b}\ge 1}
        \binom{n_{1}-d}{a}
        q^{\sum_{j=1}^{b}n_{j}l_{j}+cn_{b}}
    \end{align*}
    because $\binom{n_{1}-d}{a}=0$ if $n_{1}=d$.
    We perform the change of summation variables
    $(n_{1}, \ldots , n_{b}, l_{1}, \ldots , l_{b})
        \mapsto
        (s_{1}, \ldots , s_{b}, m_{1}, \ldots , m_{b})$
    defined by
    \begin{align*}
        s_{1}=n_{1}-d, \qquad s_{j}=n_{j}-n_{j-1} \quad (2\le j \le b)
    \end{align*}
    and
    \begin{align*}
        m_{j}=c+\sum_{i=b-j+1}^{b}l_{i} \quad (1\le j \le b).
    \end{align*}
    The sum is then taken over integers $s_{j}$ and $m_{j} \, (1\le j \le b)$
    satisfying $s_{1}, \ldots , s_{b}\ge 0$ and
    $c<m_{1}<\cdots <m_{b}$.
    It holds that
    \begin{align*}
        \sum_{j=1}^{b}n_{j}l_{j}+cn_{b}=\sum_{j=1}^{b}s_{j}m_{b-j+1}+dm_{b}.
    \end{align*}
    We take the sum over $s_{1}, \ldots , s_{b}\ge 0$.
    By using
    \begin{align*}
        \sum_{s\ge 0}\binom{s}{a}x^{s}=\frac{x^{a}}{(1-x)^{a+1}} \qquad (|x|<1),
    \end{align*}
    we see that
    \begin{align*}
        I=(1-q)^{a+b}\sum_{c<m_{1}<\cdots <m_{b}}
        \left(\prod_{j=1}^{b-1}\frac{1}{1-q^{m_{j}}}\right)
        \frac{q^{am_{b}}}{(1-q^{m_{b}})^{a+1}}\,
        q^{dm_{b}},
    \end{align*}
    which is equal to the right-hand side of \eqref{eq:duality-lem}.
\end{proof}

Now, we are ready to prove Theorem \ref{Thm: Ordinary duality}.

\begin{proof}[Proof of Theorem \ref{Thm: Ordinary duality}]
    Let $\bm{k}$ be an admissible index.
    We define positive integers $a_{j}, b_{j} \, (1\le j \le s)$
    by \eqref{eq:def-ab}.
        {}From the definition of $\phi_{<N}^{q}(\bm{k})$,
    we see that
    \begin{align}
        \label{eq:phi-limit}
        \lim_{N\to \infty}\phi_{<N}^{q}(\bm{k})=(1-q)^{\sum_{j=1}^{s}a_{j}}
        \sum
        \prod_{j=1}^{s}\left(
        \binom{n_{j, 1}-n_{j-1,b_{j-1}}}{a_{j}}
        \prod_{l=1}^{b_{j}}\frac{q^{n_{j,l}}}{[n_{j,l}]}
        \right),
    \end{align}
    where $n_{0, b_{0}}=0$ and the sum is taken over
    all integers $n_{j, l} \, (1\le j\le s, \, 1\le l\le b_{j})$
    satisfying
    \begin{align*}
        0<n_{1, 1}\le \cdots \le n_{1, b_{1}}<
        n_{2,1}\le \cdots \le n_{2, b_{2}}<\cdots
        <n_{s, 1}\le \cdots \le n_{s, b_{s}}.
    \end{align*}
    By using \eqref{eq:duality-lem} repeatedly,
    we see that
    the right-hand side of \eqref{eq:phi-limit} is equal to
    $\zeta^{q}(\bm{k}^{\dagger})$.
\end{proof}

\section{An identity of a $q$-analogue of Kawashima function}
\label{sec:Kawashima}

For an index $\bm{k}$, we define its Hoffman dual $\bm{k}^{\vee}$ as follows.
We write $\bm{k}$ in the form
$\bm{k}=(1 \square 1 \square \cdots \square 1)$,
where $\square$ is either $+$ (plus symbol) or $,$ (comma).
Then $\bm{k}^{\vee}$ is the index obtained by changing $+$ to $,$ and
vice versa.
For example, if $\bm{k}=(2, 1, 3, 2)=(1+1, 1, 1+1+1, 1+1)$,
then $\bm{k}^{\vee}=(1, 1+1+1,1,1+1,1)=(1,3,1,2,1)$.

Let $\bm{k}$ be an index.
We set $\bm{k}^{\vee}=(k_{1}', \ldots , k_{s}')$
and define the function $F_{\bm{k}}^{q}(z)$, which is a $q$-analogue
of the Kawashima function, by
\begin{align}
    F_{\bm{k}}^{q}(z)=-\sum_{0<m_{1}\le \cdots \le m_{s}}
    \left(
    \prod_{a=1}^{s-1}\frac{q^{(k_{a}'-1)m_{a}}}{[m_{a}]^{k_{a}'}}
    \right)
    \frac{q^{k_{s}'m_{s}}}{[m_{s}]^{k_{s}'}}
    \prod_{j=1}^{m_{s}}\frac{z-q^{j-1}}{1-q^{j}}.
    \label{eq:q-Kawashima-F}
\end{align}

\begin{prop}
    The infinite sum in \eqref{eq:q-Kawashima-F} converges absolutely and
    defines an analytic function in the region $|z|<q^{-1}$.
\end{prop}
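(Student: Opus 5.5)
Our plan is to group the series by the value of $m_s$, bound the coefficient attached to each $m_s$ by a polynomial in $m_s$, and bound the product $\prod_{j=1}^{m_s}(z-q^{j-1})/(1-q^{j})$ uniformly on compact subsets of the disk $|z|<q^{-1}$. Since every term is a polynomial in $z$, local uniform absolute convergence then shows, by the Weierstrass theorem, that the sum is analytic.

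\textbf{Step 1: the coefficient of each $m_s$.} We write $M=m_s$. Since $0<q<1$, we have $[m]\ge 1$ for every positive integer $m$, and also $q^{(k'_a-1)m_a}\le 1$. Hence each factor $q^{(k_a'-1)m_a}/[m_a]^{k_a'}$ is at most $1$, and the factor $q^{k_s'M}/[M]^{k_s'}$ is at most $q^{k_s' M}\le q^{M}$, using $k_s'\ge 1$. The number of tuples $0<m_1\le\cdots\le m_{s-1}\le M$ is at most $M^{s-1}$. So the total contribution of all terms with $m_s=M$ is bounded in absolute value by
\begin{align*}
M^{s-1}q^{M}\prod_{j=1}^{M}\frac{|z-q^{j-1}|}{1-q^{j}} .
\end{align*}

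\textbf{Step 2: the product.} Fix $\rho$ with $0<\rho<q^{-1}$ and take $|z|\le\rho$. Then $|z-q^{j-1}|\le \rho+q^{j-1}=\rho(1+q^{j-1}/\rho)$. We also have $1-q^{j}\ge 1-q$, and $\prod_{j\ge1}(1-q^j)^{-1}=:C_q<\infty$. This gives
\begin{align*}
\prod_{j=1}^{M}\frac{|z-q^{j-1}|}{1-q^{j}}\le C_q\,\rho^{M}\prod_{j\ge1}\left(1+\frac{q^{j-1}}{\rho}\right)=C'\rho^{M},
\end{align*}
where the infinite product converges because $\sum_j q^{j-1}<\infty$. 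If $\rho$ is small, for instance $\rho<1$, we simply use $\rho=1$ in this estimate, which is harmless. The main point here is to avoid overestimating $|z-q^{j-1}|$ by $|z|+1$, which would lose the geometric decay. The factors $(1+q^{j-1}/\rho)$ contribute only a bounded constant, so the growth of the product is exactly $\rho^{M}$.

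\textbf{Step 3: conclusion.} The series is dominated by $C'\sum_{M\ge1}M^{s-1}(q\rho)^{M}$, which converges because $q\rho<1$. Therefore the series in \eqref{eq:q-Kawashima-F} converges absolutely and uniformly on $|z|\le\rho$. Since every partial sum is a polynomial in $z$, the limit is analytic on $|z|<\rho$. As $\rho<q^{-1}$ is arbitrary, the limit is analytic on $|z|<q^{-1}$.

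The only delicate point is Step 2: we must keep the factor $q^{k_s'm_s}$ with at least one full power $q^{m_s}$ and match it against $\rho^{m_s}$. This is exactly what makes the radius $q^{-1}$ the natural one.
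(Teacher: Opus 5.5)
Your proof is correct and follows the same route as the paper. Both arguments bound each summand by $q^{m_s}$ times $\prod_{j=1}^{m_s}|z-q^{j-1}|/(1-q^{j})$, show that this product grows geometrically at a rate below $q^{-1}$ on each closed disk $|z|\le\rho<q^{-1}$, and conclude with the Weierstrass $M$-test. You also make explicit the count $m_s^{s-1}$ of inner indices, which the paper leaves implicit.

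The only real difference is the product estimate. You factor out $\rho^{m_s}$ and absorb the rest into the convergent products $\prod_{j\ge1}(1+q^{j-1}/\rho)$ and $\prod_{j\ge1}(1-q^{j})^{-1}$, getting the sharp rate $\rho^{m_s}$. The paper instead splits at a threshold $M$ with $q^{M}<d$ and bounds each later factor by $(c+d)/(1-d)<q^{-1}$, which needs the auxiliary constant $d$ but no infinite products.
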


\begin{proof}
    Fix a constant $c$ with $0<c<q^{-1}$.
    Suppose that $|z|\le c$.
    Set $d=q(1-qc)/(1+q)$.
    Note that $0<d<1$.
    We take a positive integer $M$ such that
    $q^{M}<d$.
    Then, if $j>M$, it holds that
    \begin{align*}
        \left|\frac{z-q^{j-1}}{1-q^{j}}\right|
         & \le
        \left|\frac{z-q^{j-1}}{1-q^{j}}-z\right|+|z|=
        \frac{q^{j-1}}{1-q^{j}}|1-zq|+|z|
        \\
         & \le
        \frac{q^{M}}{1-q^{M}}(1+c)+c\le \frac{c+d}{1-d}.
    \end{align*}
    For $1\le j\le M$, we have
    \begin{align*}
        \left|\frac{z-q^{j-1}}{1-q^{j}}\right| \le
        \frac{c+1}{1-q}.
    \end{align*}
    For any positive integer $m$, we have $1/[m]\le 1$.
    Therefore, if $0<m_{1}\le \cdots \le m_{s}$ and $m_{s}>M$,
    it holds that
    \begin{align*}
        \left|
        \left(
        \prod_{a=1}^{s-1}\frac{q^{(k_{a}'-1)m_{a}}}{[m_{a}]^{k_{a}'}}
        \right)
        \frac{q^{k_{s}'m_{s}}}{[m_{s}]^{k_{s}'}}
        \prod_{j=1}^{m_{s}}\frac{z-q^{j-1}}{1-q^{j}}
        \right|\le
        q^{m_{s}} \left(\frac{c+1}{1-q}\right)^{M}
        \left(\frac{c+d}{1-d}\right)^{m_{s}-M}
    \end{align*}
    because $k_{s}'\ge 1$.
        {}From the definition of $d$ and $0<c<q^{-1}$, we see that
    \begin{align*}
        \frac{c+d}{1-d}=(1+q)\frac{1+c}{1+q^{2}c}-1<q^{-1}.
    \end{align*}
    Hence, we can apply the Weierstrass $M$-test to
    the series in \eqref{eq:q-Kawashima-F}.
\end{proof}

The function $F_{\bm{k}}^{q}(z)$ has the following property.

\begin{prop}{\cite{Takeyama1}}\label{prop:F-value}
    For any index $\bm{k}=(k_{1}, \ldots , k_{r})$ and
    any positive integer $N$, it holds that
    \begin{align*}
        F_{\bm{k}}^{q}(q^{N-1})=\sum_{0<m_{1}\le \cdots \le m_{r}<N}
        \frac{q^{m_{1}+\cdots +m_{r}}}{[m_{1}]^{k_{1}} \cdots [m_{r}]^{k_{r}}}.
    \end{align*}
\end{prop}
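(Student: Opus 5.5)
The plan is to evaluate the $q$-binomial factor at $z=q^{N-1}$ explicitly, reduce the statement to a finite $q$-binomial (Newton-type) transform, and then prove the resulting identity by induction on the weight $|\bm{k}|$, following how the Hoffman dual is built up.

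\textbf{Step 1: evaluate the product.} For $z=q^{N-1}$ each factor satisfies
\begin{align*}
\frac{q^{N-1}-q^{j-1}}{1-q^{j}}=-q^{j-1}\,\frac{1-q^{N-j}}{1-q^{j}}.
\end{align*}
The factor with $j=N$ vanishes, so only terms with $m_{s}\le N-1$ survive in \eqref{eq:q-Kawashima-F}. For $m\le N-1$ the product equals
\begin{align*}
\prod_{j=1}^{m}\frac{q^{N-1}-q^{j-1}}{1-q^{j}}=(-1)^{m}q^{\binom{m}{2}}\begin{bmatrix}N-1\\ m\end{bmatrix}_{q}.
\end{align*}
Writing $\bm{k}^{\vee}=(k'_{1},\ldots,k'_{s})$ and $n=N-1$, we get
\begin{align*}
F^{q}_{\bm{k}}(q^{n})=\sum_{m=1}^{n}(-1)^{m-1}q^{\binom{m}{2}}\begin{bmatrix}n\\ m\end{bmatrix}_{q}\,\frac{q^{k'_{s}m}}{[m]^{k'_{s}}}\,A_{\bm{k}^{\vee}}(m),
\end{align*}
where $A_{\bm{k}^{\vee}}(m)$ denotes the inner star-sum over $0<m_{1}\le\cdots\le m_{s-1}\le m$. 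Denote the right-hand side of the proposition by $R_{\bm{k}}(n)$, with $n=N-1$.

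\textbf{Step 2: induction on $|\bm{k}|$.} Every index is obtained from $(1)$ by two moves: appending ``$,1$'' or appending ``$+1$'' at the end. Under Hoffman duality these become, respectively, ``$+1$'' and ``$,1$'' at the end of $\bm{k}^{\vee}$.
\begin{itemize}
\item The base case $\bm{k}=(1)$ is the classical identity
\begin{align*}
\sum_{m=1}^{n}(-1)^{m-1}q^{\binom{m}{2}}\begin{bmatrix}n\\ m\end{bmatrix}_{q}\frac{q^{m}}{[m]}=\sum_{j=1}^{n}\frac{q^{j}}{[j]}.
\end{align*}
It follows by taking the difference in $n$ via the $q$-Pascal rule together with the $q$-binomial theorem $\sum_{m}(-1)^{m}q^{\binom{m}{2}}\left[\begin{smallmatrix}n-1\\ m\end{smallmatrix}\right]_{q}=\delta_{n,1}$.
\item For the inductive step I would establish two transform lemmas. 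Write $T[f](n)$ for the transform $\sum_{m}(-1)^{m-1}q^{\binom{m}{2}}\left[\begin{smallmatrix}n\\ m\end{smallmatrix}\right]_{q}f(m)$.
\begin{enumerate}
\item[(a)] Multiplying the summand by $q^{m}/[m]$ (raising $k'_{s}$) turns $T[f](n)$ into $\sum_{j=1}^{n}\frac{q^{j}}{[j]}T[f](j)$ up to the correct $q$-powers. This is proved from $\frac{[j]}{[m]}\left[\begin{smallmatrix}j-1\\ m-1\end{smallmatrix}\right]_{q}=\left[\begin{smallmatrix}j\\ m\end{smallmatrix}\right]_{q}$ and the telescoping form of $q$-Pascal, $\left[\begin{smallmatrix}n\\ m\end{smallmatrix}\right]_{q}=\sum_{j=m}^{n}q^{(\cdot)}\left[\begin{smallmatrix}j-1\\ m-1\end{smallmatrix}\right]_{q}$.
\item[(b)] Appending a new entry $1$ to $\bm{k}^{\vee}$ corresponds to multiplying the outermost summand of $R$ by $1/[m_{r}]$ without extra summation.
\end{enumerate}
\end{itemize}
With these two lemmas, each move on $\bm{k}$ matches the corresponding move on $R_{\bm{k}}$: a new nested summation $\sum_{m_{r}\le n}q^{m_{r}}/[m_{r}]$, or raising $k_{r}$ by one.

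\textbf{Main obstacle.} The hard part is the bookkeeping of powers of $q$. In the nested sums the numerator is $q^{(k'_{a}-1)m_{a}}$, except for the last entry, which carries $q^{k'_{s}m_{s}}$. On the other side $R$ carries exactly $q^{m_{a}}$ per variable. When an entry moves from last to non-last position (move (b)), one power $q^{m}$ must be absorbed, and it must come out of the $q$-Pascal step. I would first fix which version of $q$-Pascal to use ($q^{m}$ versus $q^{n-m}$ weighting) by checking weight $2$ by hand, for $\bm{k}=(2)$ and $(1,1)$. Once the correct version is determined, I would state (a) and (b) precisely in that form. If the induction becomes messy, the fallback is to prove the analogous finite difference equation $R_{\bm{k}}(n)-R_{\bm{k}}(n-1)=\frac{q^{n}}{[n]^{k_{r}}}\,R_{(k_{1},\ldots,k_{r-1})}(n)$ directly for $F^{q}_{\bm{k}}(q^{n})$.
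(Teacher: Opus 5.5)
The paper does not prove this proposition; it is quoted from \cite{Takeyama1}, so your argument has to stand on its own. Step 1, the base case and lemma (a) are correct. In fact (a) holds exactly: $T[g\,q^{m}/[m]](n)=\sum_{j=1}^{n}\frac{q^{j}}{[j]}\,T[g](j)$. This follows from $\binom{j}{m}_{q}/[j]=\binom{j-1}{m-1}_{q}/[m]$ and $\sum_{j=m}^{n}q^{j-m}\binom{j-1}{m-1}_{q}=\binom{n}{m}_{q}$, and it settles the move $\bm{k}\to(\bm{k},1)$.

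The gap is the other move, $\bm{k}\to\bm{k}^{+}=(k_{1},\ldots,k_{r}+1)$, i.e.\ appending $1$ to $\bm{l}=\bm{k}^{\vee}=(l_{1},\ldots,l_{s})$. Your lemma (b) only restates what has to be shown, and your fallback difference equation is the proposition itself in recursive form; neither gives a mechanism. This move creates a new outermost summation on the $F$-side but none on the $R$-side, so it must be phrased through differences $\Delta f(n)=f(n)-f(n-1)$. You need $\Delta F^{q}_{\bm{k}^{+}}(q^{n})=[n]^{-1}\Delta F^{q}_{\bm{k}}(q^{n})$, which is exactly the relation $R$ satisfies. Write $F^{q}_{\bm{k}}(q^{n})=T[q^{m}P_{\bm{l}}](n)$ with
\[
P_{\bm{l}}(m)=\frac{q^{(l_{s}-1)m}}{[m]^{l_{s}}}\sum_{0<m_{1}\le\cdots\le m_{s-1}\le m}\ \prod_{a=1}^{s-1}\frac{q^{(l_{a}-1)m_{a}}}{[m_{a}]^{l_{a}}},
\]
so that $P_{(\bm{l},1)}(m)=[m]^{-1}\sum_{j=1}^{m}P_{\bm{l}}(j)$. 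The Pascal rule $\binom{n}{m}_{q}-\binom{n-1}{m}_{q}=q^{n-m}\binom{n-1}{m-1}_{q}$ does absorb the $q^{m}$, as you guessed:
\[
\Delta T[q^{m}P](n)=q^{n}\sum_{m}(-1)^{m-1}q^{\binom{m}{2}}\binom{n-1}{m-1}_{q}P(m).
\]
But this is not enough. For $P=P_{(\bm{l},1)}$ you must also:
\begin{itemize}
\item use $\binom{n-1}{m-1}_{q}/[m]=\binom{n}{m}_{q}/[n]$;
\item interchange the sums over $m$ and $j\le m$;
\item evaluate the resulting partial alternating sum, which needs the extra identity
\[
\sum_{m=j}^{n}(-1)^{m-1}q^{\binom{m}{2}}\binom{n}{m}_{q}=(-1)^{j-1}q^{\binom{j}{2}}\binom{n-1}{j-1}_{q}\qquad(1\le j\le n).
\]
\end{itemize}
That identity follows by telescoping with the other form of Pascal's rule, $\binom{n}{m}_{q}=q^{m}\binom{n-1}{m}_{q}+\binom{n-1}{m-1}_{q}$. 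Together these give $\Delta T[q^{m}P_{(\bm{l},1)}](n)=[n]^{-1}\Delta T[q^{m}P_{\bm{l}}](n)$. Since everything vanishes at $n=0$, the induction then closes. This summation-by-parts step and the partial-sum identity are the missing idea; without them the ``$+1$'' move is not proved.
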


For an integer $m$, we set
\begin{align*}
    [m+\log_{q}{z}]=\frac{1-zq^{m}}{1-q}.
\end{align*}
Note that, if $z=q^{N}$ for an integer $N$,
we have $[m+\log_{q}{z}]=[m+N]$.

For an index $\bm{k}=(k_{1}, \ldots , k_{r})$,
we define the function $G_{\bm{k}}^{q}(z)$ by
\begin{align}
    \label{eq:q-Kawashima-G}
    G^{q}_{\bm{k}}(z)=\sum
     &
    \left\{
    \prod_{j=1}^{r-1}
    \left(
    \prod_{l=1}^{k_{a}-1}\frac{1}{[m_{j, l}+\log_{q}{z}]} \,
    \right)
    \frac{1}{[m_{j, k_{j}}]}
    \right\}
    \\ \nonumber
     & \times
    \left(
    \prod_{l=1}^{k_{r}-1}\frac{1}{[m_{r, l}+\log_{q}{z}]}
    \right)
    \left(
    \frac{q^{m_{r,k_{r}}}}{[m_{r,k_{r}}]}-\frac{q^{m_{r,k_{r}}}z}{[m_{r,k_{r}}+\log_{q}{z}]}
    \right),
\end{align}
where the sum is over the region
\begin{align*}
    0<m_{1,1}\leq\cdots\leq m_{1,k_{1}}<m_{2,1}\leq\cdots\leq m_{2,k_{2}}< \cdots
    <m_{r,1}\leq\cdots\leq m_{r,k{r}}.
\end{align*}

\begin{prop}
    The infinite sum in \eqref{eq:q-Kawashima-G} converges absolutely and
    defines an analytic function in the region $|z|<q^{-1}$.
\end{prop}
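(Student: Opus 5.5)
The plan is to mirror the proof of the preceding proposition for $F_{\bm{k}}^{q}(z)$: fix $0<c<q^{-1}$, find a bound on each summand of \eqref{eq:q-Kawashima-G} that holds uniformly on the disk $|z|\le c$ and is summable, and then apply the Weierstrass $M$-test. Since each summand is analytic on $|z|<q^{-1}$, uniform convergence on every such closed disk will give absolute convergence and analyticity of the sum. (The index $k_{a}$ in the first product of \eqref{eq:q-Kawashima-G} I read as $k_{j}$.)

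\emph{Step 1: bounds on the individual factors.} For $m\ge 1$ and $|z|\le c$ we have $|zq^{m}|\le cq<1$. Hence
\begin{align*}
\left|\frac{1}{[m+\log_{q}{z}]}\right|=\frac{1-q}{|1-zq^{m}|}\le \frac{1-q}{1-cq}.
\end{align*}
In particular $1/[m+\log_{q}{z}]$ is analytic in $z$ on $|z|<q^{-1}$. We also have $1/[m]\le 1$. For the last factor I would combine the two terms:
\begin{align*}
\frac{q^{m}}{[m]}-\frac{q^{m}z}{[m+\log_{q}{z}]}
=\frac{(1-q)\,q^{m}(1-z)}{(1-q^{m})(1-zq^{m})}.
\end{align*}
Its absolute value is at most $C q^{m}$, where $C=(1+c)/(1-cq)$ depends only on $c$ (using $1-q^{m}\ge 1-q$). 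This exponential decay in the largest summation variable $M:=m_{r,k_{r}}$ is the key point, and the combination above is the one step that requires any care.

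\emph{Step 2: counting and summing.} Multiplying the bounds from Step 1, every summand is bounded in absolute value on $|z|\le c$ by
\begin{align*}
C\left(\frac{1-q}{1-cq}\right)^{|\bm{k}|-r}q^{M}.
\end{align*}
The number of admissible tuples $(m_{j,l})$ with $m_{r,k_{r}}=M$ is at most $M^{|\bm{k}|-1}$, since all the other variables lie in $\{1,\dots,M\}$. The dominating series is therefore bounded by a constant times
\begin{align*}
\sum_{M\ge1}M^{|\bm{k}|-1}q^{M}<\infty.
\end{align*}
By the Weierstrass $M$-test, the series \eqref{eq:q-Kawashima-G} converges absolutely and uniformly on $|z|\le c$. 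Since $c<q^{-1}$ was arbitrary, the limit is analytic on $|z|<q^{-1}$.

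I expect no real obstacle here. Unlike the $F^{q}_{\bm{k}}$ case, there is no growing product $\prod_{j}(z-q^{j-1})/(1-q^{j})$ to control, so the argument reduces to the elementary uniform bounds in Step 1.
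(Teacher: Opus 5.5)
Your proposal is correct and matches the paper's proof: fix $0<c<q^{-1}$, use $1/[m]\le 1$ and $|[m+\log_{q}{z}]|\ge (1-cq)/(1-q)$ to bound every summand on $|z|\le c$ by a constant times $q^{m_{r,k_{r}}}$, and apply the Weierstrass $M$-test. The differences are cosmetic. The paper bounds the last factor by the plain triangle inequality, getting $q^{m}\left(1+\frac{c(1-q)}{1-cq}\right)$, so combining it into one fraction is not needed. You also spell out the polynomial count of tuples with fixed $m_{r,k_{r}}$, which the paper leaves implicit.
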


\begin{proof}
    Let $c$ be a constant satisfying $0<c<q^{-1}$.
    Suppose that $|z|\le c$.
    For any positive integer $m$, it holds that $1/[m]\le 1$ and
    \begin{align*}
        \left|[m+\log_{q}{z}]\right|\ge \frac{1-cq}{1-q}>0.
    \end{align*}
    Hence, we have
    \begin{align*}
        \left| \frac{q^{m}}{[m]}-\frac{q^{m}z}{[m+\log_{q}{z}]} \right|
        \le q^{m}\left(1+\frac{c(1-q)}{1-cq}\right).
    \end{align*}
    Therefore, the Weierstrass $M$-test applies to the series
    in \eqref{eq:q-Kawashima-G}.
\end{proof}

We prove the following lemma,
which will be needed to evaluate $G_{\bm{k}}(z)$ at $z=q^{N-1}$
for a positive integer $N$.

\begin{lem}
    Let $N$ be a positive integer.
    For any positive integers $m', n', n''$ satisfying $n'\le n''\le N-1$,
    the following identities hold.
    \begin{align}
        \label{eq: G-flat(1)}
        \frac{1}{[m^{\prime}+N-1]} &
        \sum_{m=m^{\prime}}^{\infty}
        \left( \frac{q^{m+N-1-n^{\prime\prime}}}{[m+N-1-n^{\prime\prime}]}-
        \frac{q^{m+N-n^{\prime}}}{[m+N-n^{\prime}]} \right)
        \\
                                   & =\sum_{n=n^{\prime}}^{n^{\prime\prime}}
        \left( \frac{q^{m^{\prime}+N-1-n}}{[m^{\prime}+N-1-n]}-
        \frac{q^{m^{\prime}+N-1}}{[m^{\prime}+N-1]} \right)
        \frac{1}{[n]},
        \nonumber
        \\
        \label{eq: G-flat(2)}
        \frac{1}{[m^{\prime}]}     &
        \sum_{m=m^{\prime}+1}^{\infty}
        \left( \frac{q^{m+N-1-n^{\prime\prime}}}{[m+N-1-n^{\prime\prime}]}-
        \frac{q^{m+N-n^{\prime}}}{[m+N-n^{\prime}]} \right)
        \\
                                   & =\sum_{n=n^{\prime}}^{n^{\prime\prime}}
        \left( \frac{q^{m^{\prime}}}{[m^{\prime}]}-
        \frac{q^{m^{\prime}+N-n}}{[m^{\prime}+N-n]} \right)
        \frac{q^{N-n}}{[N-n]}.
        \nonumber
    \end{align}
\end{lem}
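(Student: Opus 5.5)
The plan is to reduce both identities to a telescoping argument combined with one elementary partial-fraction identity for $q$-integers. Write $f(x)=q^{x}/[x]$ for positive integers $x$. Since $0<q<1$, we have $f(x)\to 0$ as $x\to\infty$. The basic identity is $[a+b]=[a]+q^{a}[b]$, and from it
\begin{align*}
    f(b)-f(a+b)=\frac{q^{b}\bigl([a+b]-q^{a}[b]\bigr)}{[b][a+b]}=\frac{q^{b}[a]}{[b][a+b]}
    \qquad (a,b\ge 1).
\end{align*}

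First I handle the left-hand sides. For fixed $m$, the difference $f(m+N-1-n'')-f(m+N-n')$ telescopes in an auxiliary variable $n$:
\begin{align*}
    f(m+N-1-n'')-f(m+N-n')=\sum_{n=n'}^{n''}\bigl(f(m+N-1-n)-f(m+N-n)\bigr).
\end{align*}
All arguments are at least $m\ge 1$ because $n''\le N-1$. The $n$-sum is finite and each inner series converges absolutely, so I can interchange the sums over $m$ and $n$. For fixed $n$, the sum over $m\ge m_{0}$ of $f(m+N-1-n)-f(m+N-n)$ telescopes to $f(m_{0}+N-1-n)$, using $f\to 0$. Hence:
\begin{itemize}
    \item with $m_{0}=m'$, the left-hand side of \eqref{eq: G-flat(1)} equals $\frac{1}{[m'+N-1]}\sum_{n=n'}^{n''}f(m'+N-1-n)$;
    \item with $m_{0}=m'+1$, the left-hand side of \eqref{eq: G-flat(2)} equals $\frac{1}{[m']}\sum_{n=n'}^{n''}f(m'+N-n)$.
\end{itemize}

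Next I match the right-hand sides term by term in $n$ using the partial-fraction identity.

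For \eqref{eq: G-flat(1)}, take $b=m'+N-1-n\ge m'\ge 1$ and $a=n\ge 1$, so that $a+b=m'+N-1$. The identity gives
\begin{align*}
    \bigl(f(m'+N-1-n)-f(m'+N-1)\bigr)\frac{1}{[n]}=\frac{q^{b}}{[b][m'+N-1]}=\frac{f(m'+N-1-n)}{[m'+N-1]},
\end{align*}
which is the $n$-th term obtained above.

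For \eqref{eq: G-flat(2)}, take $b=m'$ and $a=N-n\ge 1$. The identity gives
\begin{align*}
    \bigl(f(m')-f(m'+N-n)\bigr)\frac{q^{N-n}}{[N-n]}=\frac{q^{m'}[N-n]}{[m'][m'+N-n]}\cdot\frac{q^{N-n}}{[N-n]}=\frac{f(m'+N-n)}{[m']},
\end{align*}
again matching term by term.

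The only delicate points are bookkeeping rather than any real obstacle. One is to check that all $q$-integers involved have positive arguments, which is exactly where the hypotheses $n'\le n''\le N-1$ and $m'\ge 1$ are used. The other is to justify the interchange of the $m$- and $n$-sums and the vanishing of the telescoping tail, both of which follow from $f(x)=O(q^{x})$.
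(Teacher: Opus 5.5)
Your proof is correct and follows essentially the same route as the paper: first collapse the infinite $m$-sum to the finite sum $\frac{1}{[m'+N-1]}\sum_{n=n'}^{n''} q^{m'+N-1-n}/[m'+N-1-n]$ (or its analogue for the second identity) using the convergence of $\sum_{m} q^{m}/[m]$, then match terms via the partial fraction decomposition that follows from $[a+b]=[a]+q^{a}[b]$. Your double telescoping (first in $n$, then in $m$) is a more explicit version of the paper's one-line tail-difference step, and you also write out the second identity, which the paper leaves as ``similar.''
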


\begin{proof}
    Here we prove \eqref{eq: G-flat(1)}.
    The proof of \eqref{eq: G-flat(2)} is similar.
    Since the sum $\sum_{m\ge 1}q^{m}/[m]$ converges and $n'\le n''$,
    we see that the left-hand side is equal to
    \begin{align*}
        \frac{1}{[m'+N-1]}\sum_{n=n'}^{n''}\frac{q^{m'+N-n-1}}{[m'+N-n-1]}.
    \end{align*}
    Using the partial fraction decomposition
    \begin{align*}
        \frac{q^{m'+N-n-1}}{[m'+N-1][m'+N-n-1]}=
        \left( \frac{q^{m^{\prime}+N-1-n}}{[m^{\prime}+N-1-n]}-
        \frac{q^{m^{\prime}+N-1}}{[m^{\prime}+N-1]} \right)
        \frac{1}{[n]},
    \end{align*}
    we obtain \eqref{eq: G-flat(1)}.
\end{proof}

\begin{prop}\label{prop:G-value}
    For any index $\bm{k}=(k_{1}, \ldots , k_{r})$ and
    any positive integer $N$, we have
    \begin{align}
        G^{q}_{\overset{\leftarrow}{\bm{k}}}(q^{N-1})=
        \sum\prod_{j=1}^{r}
        \left(
        \frac{q^{N-n_{j,1}}}{[N-n_{j,1}]}
        \prod_{l=2}^{k_{j}}
        \frac{1}{[n_{j,l}]}
        \right),
        \label{eq:G-value}
    \end{align}
    where the sum is taken over all integers
    $n_{j, l} \, (1\le j \le r, \, 1\le l \le k_{j})$ satisfying
    \eqref{eq:sum-condition-star-MSW}.
\end{prop}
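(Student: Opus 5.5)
Substitute $z=q^{N-1}$ in the definition \eqref{eq:q-Kawashima-G} of $G^{q}_{\overleftarrow{\bm{k}}}$, so that $[m+\log_q z]=[m+N-1]$. The reversed index is $(k_r,\ldots,k_1)$, and I label its summation variables by $m_{r,l},\ldots,m_{1,l}$, so that block $j$ of the original index corresponds to the $j$-th block of $\overleftarrow{\bm{k}}$ read from the right. The factor attached to the last variable becomes
\[
\frac{q^{m}}{[m]}-\frac{q^{m+N-1}}{[m+N-1]},
\]
which is exactly a difference of the type appearing in the lemma (the case $n'=n''$ of the telescoped forms, with suitable shifts). The strategy is to eliminate the infinite summation variables one at a time, from the innermost (largest) variable backwards. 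At each step one application of \eqref{eq: G-flat(1)} or \eqref{eq: G-flat(2)} replaces an infinite sum over some $m$ by a finite sum over a new variable $n$ with $n'\le n\le n''\le N-1$. These new variables $n$ become the $n_{j,l}$ on the right-hand side of \eqref{eq:G-value}.

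Concretely, I will prove by induction a more general statement. After the variables $m_{i,l}$ beyond a given position have been summed, the partial sum equals a finite sum over some already-created $n$'s, times a ``boundary'' factor of the form
\[
\frac{q^{m'+N-1-n''}}{[m'+N-1-n'']}-\frac{q^{m'+N-n'}}{[m'+N-n']}
\]
(or its $[m']$-counterpart), summed over the next outer variable $m'$. Here $n'$ and $n''$ are the current extreme values of the $n$'s.

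The two cases of the lemma match the two kinds of factors in $G^q$:
\begin{itemize}
\item Within a block the constraint is $m_{j,l}\le m_{j,l+1}$ and the factor is $1/[m+N-1]$. Summing with lower limit $m=m'$ uses \eqref{eq: G-flat(1)} and produces a factor $1/[n]$. This gives the within-block ordering $n\le\cdots$ and the weights $1/[n_{j,l}]$, $l\ge2$.
\item At a block boundary the constraint is strict ($m'<m$) and the factor is $1/[m_{j,k_j}]$. Summing uses \eqref{eq: G-flat(2)} and produces $q^{N-n}/[N-n]$, which is the factor attached to $n_{j,1}$.
\end{itemize}
The reversal of the index is what makes the $1/[m]$ factor sit at the start of each original block.

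The main bookkeeping step is to check that the ranges of the new variables reproduce exactly the condition \eqref{eq:sum-condition-star-MSW}. The requirement is that within each block $0<n_{j,1}\le\cdots\le n_{j,k_j}<N$, while between consecutive blocks only $n_{j,k_j}\ge n_{j-1,1}$ holds. The summation range $n'\le n\le n''$ in the lemma should encode precisely this overlapping (rather than nested) condition: $n''$ is inherited from the previous block's first variable and $n'$ from the current constraint. I expect this to be the main obstacle, namely tracking which of $n'$ and $n''$ is fixed by which earlier variable so that the interlaced inequalities come out correctly.

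The final step is the outermost variable $m_{1,1}>0$ (in reversed labeling). Summing it with $m'=0$, or equivalently evaluating the remaining telescoping sum, leaves only the finite $n$-sum. The first factor then becomes $q^{N-n_{1,1}}/[N-n_{1,1}]$, which completes the identity.

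I would verify the base case $r=1$, $k_1=1,2$ directly before running the induction. All interchanges of infinite sums are justified by the absolute convergence shown in the preceding proposition.
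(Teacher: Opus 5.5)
Your strategy is the same as the paper's, which writes out only $\bm{k}=(2,3)$: set $z=q^{N-1}$ and eliminate the $m$-variables from the largest one backwards, using \eqref{eq: G-flat(1)} inside a block of $\overleftarrow{\bm{k}}$ and \eqref{eq: G-flat(2)} across a block boundary. However, the two concrete bookkeeping claims you make are wrong, and they are exactly what decides which range of $n$'s you get.

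Put $f(x)=q^{x}/[x]$ and $D_{n',n''}(m)=f(m+N-1-n'')-f(m+N-n')$. The initial factor $f(m)-f(m+N-1)$ is $D_{1,N-1}(m)$, not the case $n'=n''$. With $n'=n''$ the first new variable would be pinned to a single value, instead of letting $n_{1,k_1}$ run over $0<n_{1,k_1}<N$. In this notation the lemma reads
\begin{align*}
\frac{1}{[m'+N-1]}\sum_{m\ge m'}D_{n',n''}(m)&=\sum_{n=n'}^{n''}\frac{1}{[n]}\,D_{1,n}(m'),\\
\frac{1}{[m']}\sum_{m>m'}D_{n',n''}(m)&=\sum_{n=n'}^{n''}\frac{q^{N-n}}{[N-n]}\,D_{n,N-1}(m').
\end{align*}
For the outermost variable, $\sum_{m\ge 1}D_{n',n''}(m)=\sum_{n=n'}^{n''}q^{N-n}/[N-n]$. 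So there is a single shape of boundary factor, and its endpoints are not ``the current extreme values of the $n$'s''. One endpoint is the constant $1$ or $N-1$; the other is the one variable created in the immediately preceding step.

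Consequently:
\begin{itemize}
\item after a within-block step creating $n_{j,l}$, the next variable runs over $[1,n_{j,l}]$;
\item after a boundary step creating $n_{j,1}$, the next variable $n_{j+1,k_{j+1}}$ runs over $[n_{j,1},N-1]$.
\end{itemize}
So it is the lower endpoint $n'$, not $n''$, that is inherited from the previous block's first variable. With your assignment you would get $n_{j+1,k_{j+1}}\le n_{j,1}$, which is not \eqref{eq:sum-condition-star-MSW}.

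With the invariant stated correctly, each new variable is coupled only to the one created just before it. This yields exactly $0<n_{j,1}\le\cdots\le n_{j,k_j}<N$ together with the single cross-block inequality $n_{j+1,k_{j+1}}\ge n_{j,1}$. The weights come out as $1/[n_{j,l}]$ for $l\ge 2$ and $q^{N-n_{j,1}}/[N-n_{j,1}]$. The lemma's hypothesis $1\le n'\le n''\le N-1$ is visibly preserved at every step.

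One labeling slip: in your labeling the outermost variable produces $n_{r,1}$, not $n_{1,1}$. Once the invariant is fixed as above, the induction you sketch goes through and supplies the general case that the paper only illustrates.
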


\begin{proof}

    We prove the case $\bm{k}=(2,3)$ as an illustration.
    The general case follows similarly.
    We have
    \begin{align}
        G^{q}_{(3,2)}(q^{N-1})=\sum_{\substack{
                                       0<m_{1}\leq m_{2}\leq m_{3}\\ <m_{4}\leq m_{5}}}
         & \frac{1}{[m_{1}+N-1][m_{2}+N-1][m_{3}][m_{4}+N-1]}
        \\
         & \times\left(
        \frac{q^{m_{5}}}{[m_{5}]}-\frac{q^{m_{5}+N-1}}{[m_{5}+N-1]} \right).
    \end{align}
    Applying \eqref{eq: G-flat(1)} to the sum over $m_{5}$, we obtain

    \begin{align}\label{eq: G-flat1}
        G^{q}_{(3,2)}(q^{N-1})=
        \sum_{0<n_{5}<N}\frac{1}{[n_{5}]}\sum_{0<m_{1}\leq m_{2}\leq m_{3}<m_{4}}
         &
        \frac{1}{[m_{1}+N-1][m_{2}+N-1][m_{3}]}
        \\
         & \times\left(
        \frac{q^{m_{4}+N-1-n_{5}}}{[m_{4}+N-1-n_{5}]}-\frac{q^{m_{4}+N-1}}{[m_{4}+N-1]}
        \right).
    \end{align}
    Using \eqref{eq: G-flat(2)} to the sum over $m_{4}$, we get

    \begin{align}\label{eq: G-flat2}
        G^{q}_{(3,2)}(q^{N-1})=
         & \sum_{0<n_{4}\leq n_{5}<N}
        \frac{q^{N-n_{4}}}{[N-n_{4}][n_{5}]} \\
         & \qquad {}\times
        \sum_{0<m_{1}\leq m_{2}\leq m_{3}}
        \frac{1}{[m_{1}+N-1][m_{2}+N-1]}
        \left( \frac{q^{m_{3}}}{[m_{3}]}-\frac{q^{m_{3}+N-n_{4}}}{[m_{3}+N-n_{4}]} \right).
    \end{align}
    %    By reprating the above steps, we have
    Applying \eqref{eq: G-flat(1)} twice and then \eqref{eq: G-flat(2)},
    we see that the right-hand side is equal to
    \begin{align}
         &
        \sum_{\substack{0<n_{3}<N, \, 0<n_{4}\le n_{5}<N
                \\ n_{3}\ge n_{4}}}
        \frac{1}{[n_{3}]}\frac{q^{N-n_{4}}}{[N-n_{4}][n_{5}]}
        \sum_{0<m_{1}\leq m_{2}}\frac{1}{[m_{1}+N-1]}\left(
        \frac{q^{m_{2}+N-1-n_{3}}}{[m_{2}+N-1-n_{3}]}-\frac{q^{m_{2}+N-1}}{[m_{2}+N-1]}
        \right)                                                        \\
         & =\sum_{\substack{
                    0<n_{2}\leq n_{3}<N, \,0<n_{4}\leq n_{5}<N
                    \\ n_{3}\ge n_{4}}}
        \frac{1}{[n_{2}][n_{3}]}\frac{q^{N-n_{4}}}{[N-n_{4}][n_{5}]}
        \sum_{0<m_{1}}\left( \frac{q^{m_{1}+N-1-n_{2}}}{[m_{1}+N-1-n_{2}]}-\frac{q^{m_{1}+N-1}}{[m_{1}+N-1]} \right)
        \\
         & =\sum_{\substack{
                    0<n_{1}\leq n_{2}\leq n_{3}, \, 0<n_{4}\leq n_{5}<N
                    \\ n_{3}\ge n_{4}}}
        \frac{q^{N-n_{1}}}{[N-n_{1}][n_{2}][n_{3}]}\frac{q^{N-n_{4}}}{[N-n_{4}][n_{5}]}.
    \end{align}
    It is equal to the right-hand side of \eqref{eq:G-value} with $\bm{k}=(2,3)$.
\end{proof}

We now prove the main theorem of this section.

\begin{thm}
    For any index $\bm{k}$, it holds that
    \begin{align}
        F_{\bm{k}}(z)=G_{\overleftarrow{\bm{k}}}(z)
        \label{eq:q-F=G}
    \end{align}
    in the region $|z|<q^{-1}$.
\end{thm}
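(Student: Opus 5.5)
The theorem (read with $F^{q}$ and $G^{q}$) follows by showing that the two analytic functions agree at the points $z=q^{N-1}$, $N=1,2,\ldots$, and then applying the identity theorem. These points lie in the disk $|z|<q^{-1}$ and accumulate at $z=0$, which is an interior point of the disk. By the two preceding propositions, both $F_{\bm{k}}^{q}(z)$ and $G_{\overleftarrow{\bm{k}}}^{q}(z)$ are analytic on the connected domain $|z|<q^{-1}$. So if they coincide on $\{q^{N-1}\}_{N\ge 1}$, they coincide on the whole disk.

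To compare the values at $z=q^{N-1}$, I would combine Proposition \ref{prop:F-value}, Proposition \ref{prop:G-value} and the star-version \eqref{eq:star-qMSW} of the $q$-MSW formula. Proposition \ref{prop:F-value} gives
$F^{q}_{\bm{k}}(q^{N-1})=\sum_{0<m_{1}\le\cdots\le m_{r}<N}\prod_{a}q^{m_{a}}/[m_{a}]^{k_{a}}$.
Proposition \ref{prop:G-value} expresses $G^{q}_{\overleftarrow{\bm{k}}}(q^{N-1})$ as a flat-type sum with factors $q^{N-n_{j,1}}/[N-n_{j,1}]$ and $1/[n_{j,l}]$ over the region \eqref{eq:sum-condition-star-MSW}. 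Neither sum is literally in the form of \eqref{eq:star-qMSW}, because the powers of $q$ are placed differently. I would fix this with the substitution $n\mapsto N-n$ on every summation variable and a reversal of the order of the blocks. Since $q^{n}/[n]=1/[n]-(1-q)$ does not transfer exponents directly, the cleanest route is to use the elementary identity
\[
\frac{q^{N-n}}{[N-n]}\cdot\frac{1}{[n]}\ \text{versus}\ \frac{1}{[N-n]}\cdot\frac{q^{n}}{[n]},
\]
that is, $q^{m}/[m]$ evaluated at $m=N-n$ together with the symmetry $[N-n]=q^{-n}([N]-[n])$. This should rewrite the $G$-side sum as $\zeta^{\star,q\flat}_{<N}(\bm{k}')$ for a suitably twisted weight, matching the twist on the $F$-side.

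Concretely, after the change of variables $n_{j,l}\mapsto N-n_{r+1-j,\,k_{r+1-j}+1-l}$, the region \eqref{eq:sum-condition-star-MSW} for $\overleftarrow{\bm{k}}$ should map to a region of the same star type for $\bm{k}$. The factors then become $q^{n}/[n]$ on the last variable of each block and $1/[N-n]$ on the others. Comparing with \eqref{eq:q-flat} (star version), this is the $q$-MSW sum with a different distribution of $q$-powers. I would then check that \eqref{eq:star-qMSW}, or more precisely the general Theorem 1.2 of \cite{Myarticle} from which it is specialized, covers the variant with weight $q^{m_{1}+\cdots+m_{r}}/\prod[m_{a}]^{k_{a}}$. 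Alternatively, I would derive that variant by rerunning the telescoping lemma \eqref{eq: G-flat(1)}--\eqref{eq: G-flat(2)}.

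The main obstacle I expect is this bookkeeping of $q$-powers. One must verify that the value of $F^{q}$ at $q^{N-1}$ (weight $q^{m}$ per variable) matches exactly the flat sum produced by Proposition \ref{prop:G-value} after reflection, and not $\zeta^{\star,q}_{<N}$ with weight $q^{(k-1)m}$. If \eqref{eq:star-qMSW} as stated does not match directly, I would instead prove the needed flat formula by induction on $|\bm{k}|$. Each step would peel off one variable using the partial fraction $\frac{q^{b}}{[a][b]}=\bigl(\frac{q^{b}}{[b]}-\frac{q^{a+b}}{[a+b]}\bigr)\frac{1}{[a]}$-type identities already used in the lemma above.
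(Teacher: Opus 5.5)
There is a genuine gap. Your reduction to the points $z=q^{N-1}$ via the identity theorem matches the paper. The central step, $F^{q}_{\bm{k}}(q^{N-1})=G^{q}_{\overleftarrow{\bm{k}}}(q^{N-1})$, is never actually established. You correctly see that neither value is literally a side of \eqref{eq:star-qMSW}, but none of your proposed fixes closes this.

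\begin{itemize}
\item The displayed ``elementary identity'' is not an identity: $\frac{q^{N-n}}{[N-n][n]}$ and $\frac{q^{n}}{[N-n][n]}$ differ unless $N=2n$.
\item The reflection $n\mapsto N-n$ with block reversal turns the sum of Proposition \ref{prop:G-value} into one with $q^{n}/[n]$ on the largest variable of each block and $1/[N-n]$ on the others. That is still not the shape of \eqref{eq:q-flat}, where $1/[N-n]$ sits on the smallest variable and $q^{n}/[n]$ on the rest. You would need yet another, unproved, flat formula.
\item Your fallbacks are not carried out: hoping the general theorem of \cite{Myarticle} covers the weight $q^{m_{1}+\cdots+m_{r}}$, or an induction on $|\bm{k}|$. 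The induction would amount to reproving a $q$-MSW-type formula from scratch, which is the whole content of the step.
\end{itemize}

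The missing idea is the inversion $q\mapsto q^{-1}$. For fixed $N$ and $\bm{k}$, both sides of \eqref{eq:star-qMSW} are finite sums of rational functions of $q$. So the identity survives replacing $q$ by $q^{-1}$, under which $[m]\mapsto q^{1-m}[m]$.

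\begin{itemize}
\item On the left, this sends $q^{(k-1)m}/[m]^{k}$ to $q^{-k}\,q^{m}/[m]^{k}$. By Proposition \ref{prop:F-value}, the left side becomes $q^{-|\bm{k}|}F^{q}_{\bm{k}}(q^{N-1})$.
\item On the right, it sends $1/[N-n]$ to $q^{-1}\,q^{N-n}/[N-n]$ and $q^{n}/[n]$ to $q^{-1}/[n]$. So the right side becomes $q^{-|\bm{k}|}$ times the sum in Proposition \ref{prop:G-value}, that is, $q^{-|\bm{k}|}G^{q}_{\overleftarrow{\bm{k}}}(q^{N-1})$.
\end{itemize}

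No change of summation variables is needed. The $q$-power bookkeeping you flagged as the main obstacle is resolved entirely by this substitution.
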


\begin{proof}
    Let $N$ be a positive integer.
    Note that replacing $q$ by $q^{-1}$ sends $[m]$ to
    $q^{1-m}[m]$ for any integer $m$.
        {}From Proposition \ref{prop:F-value}, we see that
    \begin{align*}
        q^{-|\bm{k}|}\, F_{\bm{k}}^{q}(q^{N-1})=
        \zeta_{<N}^{\star, q}(\bm{k})\bigg|_{q\to q^{-1}}.
    \end{align*}
    Similarly, from Proposition \ref{prop:G-value}, we find that
    \begin{align*}
        q^{-|\bm{k}|}\, G_{\overleftarrow{\bm{k}}}^{q}(q^{N-1})=
        \zeta_{<N}^{\star, q\flat}(\bm{k})\bigg|_{q\to q^{-1}}.
    \end{align*}
    Since the formula \eqref{eq:star-qMSW} is an identity of
    rational functions in $q$, it implies that
    $F_{\bm{k}}^{q}(q^{N-1})=G_{\overleftarrow{\bm{k}}}^{q}(q^{N-1})$
    for any positive integer $N$.
    Now the desired equality follows from the identity theorem
    for analytic functions.
\end{proof}

\begin{rem}
    In the case $\bm{k}=(1)$,
    the identity \eqref{eq:q-F=G} can be proved directly
    by means of the $q$-Gauss summation formula,
    as follows.

    We set $(z)_{m}=\prod_{j=0}^{m-1}(1-q^{j}z)$ for $m \ge 0$
    and $(z)_{\infty}=\prod_{j=0}^{\infty}(1-q^{j}z)$.
    It holds that
    \begin{align*}
        F_{(1)}^{q}(z)=-(1-q)\sum_{m\ge 1}
        \frac{1}{1-q^{m}}\frac{(z^{-1})_{m}}{(q)_{m}}
                         (qz)^{m}.
    \end{align*}
    Since
    \begin{align*}
        \frac{1}{1-q^{m}}=\lim_{a\to 1}\frac{1}{1-a}\frac{(a)_{m}}{(aq)_{m}}
    \end{align*}
    for $m\ge 1$,
    we find that
    \begin{align*}
        F_{(1)}^{q}(z)=(1-q)\lim_{a\to 1}\frac{1}{a-1}
        \left( {}_{2}\phi_{1}(a, z^{-1}, aq; qz)-1\right),
    \end{align*}
    where ${}_{2}\phi_{1}(a, b, c; z)$ is the
    $q$-hypergeometric series
    \begin{align*}
        {}_{2}\phi_{1}(a, b, c; z)=\sum_{m\ge 0}
        \frac{(a)_{m}(b)_{m}}{(c)_{m}(q)_{m}}z^{m}.
    \end{align*}
    The $q$-Gauss summation formula
    \begin{align*}
        {}_{2}\phi_{1}(a, b, c; c/ab)=
        \frac{(c/a)_{\infty}(c/b)_{\infty}}{(c)_{\infty}(c/ab)_{\infty}}
        \qquad (|c/ab|<1)
    \end{align*}
    implies that
    \begin{align*}
        F_{(1)}^{q}(z)=(1-q)\lim_{a\to 1}\frac{1}{a-1}
        \left( g(a; z)-1\right),
    \end{align*}
    where
    \begin{align*}
        g(a; z)=\frac{(q)_{\infty}(aqz)_{\infty}}{(aq)_{\infty}(qz)_{\infty}}.
    \end{align*}
    Since $g(1; z)=1$, we have
    \begin{align*}
        F_{(1)}^{q}(z)
         & =(1-q)\frac{\partial}{\partial a}g(a; z)\bigg|_{a=1}
        =(1-q)\frac{\partial}{\partial a}\left(\log{g(a; z)}\right)\bigg|_{a=1}
        \\
         & =(1-q)\sum_{m\ge 1}\left(\frac{q^{m}}{1-q^{m}}-\frac{q^{m}z}{1-q^{m}z}\right)=
        G_{(1)}^{q}(z).
    \end{align*}
\end{rem}

\end{document}